\documentclass[12pt,a4paper]{article}

\usepackage{amsmath}
\usepackage{amsthm}
\usepackage{amsfonts}
\usepackage{amssymb}
\usepackage[english]{babel}

\newcommand{\di}{\displaystyle}

\newcommand{\ga}{\gamma}

\newcommand{\noa}{\noalign{\medskip}}
\newcommand{\la}{\lambda}

\newcommand{\qu}{\quad}

\newcommand{\al}{\alpha}
\newcommand{\be}{\beta}

\newcommand{\pa}{\partial}

\title{Flatness produced by some geometric PDEs }
\author{Iulia Hirica, Constantin Udriste, \\ Gabriel Pripoae, Ionel Tevy}

\date{}
\begin{document}
\maketitle

\newtheorem{definition}{Definition}[section]
\newtheorem{theorem}{Theorem}[section]
\newtheorem{proposition}{Proposition}[section]
\newtheorem{example}[theorem]{Example}
\newtheorem{corollary}[theorem]{Corollary}
\newtheorem{lemma}[theorem]{Lemma}
\newtheorem{remark}{Remark}[section]

\begin{abstract} This paper has several goals.
The first idea is to study the geometric PDEs of connection-flatness, curvature-flatness, Ricci-flatness,
scalar curvature-flatness in a modern and rigorous way. Although the idea is not new,
our main Theorems about flatness introduce a different point of view in Differential Geometry.
The second idea is to introduce and study the Euler-Lagrange
prolongations of PDEs-flatness solutions via associated least squares Lagrangian densities
and functionals on Riemannian manifolds.
All geometric PDEs turned into one of the most intensively
developing branches of modern differential geometry.
\end{abstract}

{\bf Mathematical Subject Classification} 2010: 58J99, 53C44, 53C21.

{\bf Key words}: geometric PDEs, connection-flatness, curvature-flatness, Ricci-flatness,
scalar curvature-flatness, least squares Lagrangian density, adapted metrics and connections.

\section{PDEs in Differential Geometry}

The behavior of many different systems in nature and science are governed by a PDEs system.
Usually such a system is thought of in terms of coordinates in order to prove
existence of solutions or to find concrete ones.
However, tensorial PDEs in differential geometry contain also information which is independent
of the choice of coordinates. This is actually the
most important information as it is independent of any external structure, artificially added
to the PDE, and in this sense it is genuine. That is why, the differential geometry is
often considered as an "art of manipulating PDEs" \cite{E}, \cite{GV}, \cite{H}, \cite{HU},
\cite{SS}, \cite{S}, \cite{SY}.

To see how differential operators arose in differential geometry, we will discuss one
important problem here: given a manifold $M$, find a necessary and sufficient condition
for $M$ to admit an adapted metric or affine connection (eventually with certain curvature properties).

Specific aims in this paper are as follows:
(i) introducing those differential geometric structures needed to define and study geometric PDEs
(some of them in a manifestly coordinate-independent way);
(ii) define PDEs and their signification within Differential Geometry;
(iii) develop techniques to find intrinsic properties of PDEs;
(iv) discuss explicit examples to illustrate the importance of the choice of an
appropriate language.

While mathematicians were working on problems related to metric, connection,
Riemann tensor field, Ricci tensor field, scalar curvature,
it turned out that physicists, from other points of view, were also interested in similar
problems.

In the last time some journals sponsored special activities in differential geometry, with particular
emphasis on PDEs. As for example, Annals of Mathematics Studies published over 280 articles with impact
on the theory of overdetermined PDEs and harmonic analysis.

Our aim is to promote the use of the differential geometric language
to define, study and treat important geometric PDEs, and least squares Lagrangian
densities produced by some geometric differential operators, having in mind research collaborations
between pure and applied mathematicians interested in PDEs.

Let $M$ be an oriented manifold of dimension $n$. Any differential operator (tensorial or not)
on the Riemannian manifold $(M,g=(g_{ij}))$ and the metric (geometric structure)
$g$ generate a least squares Lagrangian density $L$.
The extremals of the Lagrangian $\mathcal{L}=L\sqrt{\det(g_{ij})}$,
described by Euler-Lagrange PDEs, include the solutions of
initial PDEs and other solutions which we call "Euler-Lagrange prolongations" of that solutions.

In this paper we develop a more geometric approach, explaining the
true mathematical meaning of some geometric PDEs and their Euler-Lagrange prolongations.
Although we have formulated what it means to have a geometric PDE, there are still
many avenues of research to pursue for the future.

{\bf Open problem}. Find symmetries or conservation laws of geometric flatness PDEs.

\section{Flatness produced by some geometric PDEs}

The most important geometric PDEs are those producing flatness: connection-flatness, curvature-flatness, Ricci-flatness,
scalar curvature-flatness. The first PDEs system is non-tensorial, while the last three PDEs systems are tensorial.

\subsection{Connection-flat manifolds}

The connection flatness and the curvature flatness are interconnected
\cite{C}, \cite{CV}, \cite{CVa}, \cite{TK}, \cite{TY}, \cite{V}.

The pair $(M,\nabla)$, where $\nabla$ is a symmetric connection,
with the components $\Gamma^i_{jk}$, $i,j,k = \overline{1,n}$, of class $C^\infty$,
is called affine manifold.
\begin{definition}
An affine manifold $(M,\nabla)$ is called connection-flat if around each point of $M$
there exist a chart such that $\Gamma^i_{jk} =0$.
\end{definition}
Let $(M, g)$ be a Riemannian manifold. The Riemannian metric $g$ of components $g_{ij}$
and its inverse $g^{-1}$ of components $g^{ij}$ determine (locally) the Christoffel symbols of the second kind
$$\Gamma^i_{jk}=\frac{1}{2}g^{il}\left(\frac{\partial g_{lj}}{\partial x^k}+ \frac{\partial g_{lk}}{\partial x^j} -\frac{\partial g_{jk}}{\partial x^l}\right) = \frac{1}{2}g^{il}(\delta^r_l \delta^s_j \delta^t_k + \delta^r_l \delta^s_k \delta^t_j - \delta^t_l \delta^r_j \delta^s_k)\frac{\partial g_{rs}}{\partial x^t}$$
(overdetermined elliptic partial differential operator).

For the next explanations we need to introduce the {\it space of Riemannian metrics}.
Denote $S^2T^*M$ as the vector bundle of all symmetric $(0,2)$-tensors on $M$
and let $S^2_+T^*M$ be the open subset of all the positive definite ones.
The set $S^2_+T^*M$ is used when we have PDEs with the unknown a Riemannian metric.

Let us consider the condition $\Gamma^i_{jk} = 0$ like a (non-tensorial) PDEs system
$$\frac{1}{2}g^{il}(\delta^r_l \delta^s_j \delta^t_k + \delta^r_l \delta^s_k \delta^t_j - \delta^t_l \delta^r_j \delta^s_k)\frac{\partial g_{rs}}{\partial x^t}=0\leqno(1)$$
on $S^2_+T^*M$, i.e., $\frac{n^2(n+1)}{2}$ distinct first order non-linear nonhomogeneous PDEs whose
unknowns are $\frac{n(n+1)}{2}$ functions $g_{ij}$ (positive definite tensor);
for $n>1$, overdetermined system of PDEs; for $n=1$, determined system. This PDEs system is symmetric in $j,k$.

\begin{theorem}
The only solutions of PDEs system (1) are the constant Riemannian metrics $g_{ij}(x)= c_{ij}$ (Euclidean manifold).
\end{theorem}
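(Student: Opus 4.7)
The plan is to reduce system (1) to an equation on the metric alone and then solve it by a cyclic-permutation trick. Since $(g^{il})$ is the inverse of a positive definite matrix and therefore invertible, contraction of (1) with $g_{ai}$ shows that (1) is equivalent to the vanishing of the Christoffel symbols of the first kind:
$$
\frac{\partial g_{lj}}{\partial x^k}+\frac{\partial g_{lk}}{\partial x^j}-\frac{\partial g_{jk}}{\partial x^l}=0,\qquad \text{for all } l,j,k.
$$

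Next, I would exploit the symmetry $g_{ab}=g_{ba}$ together with a cyclic shift of the indices. Writing the displayed identity at the triple $(l,j,k)$ and at the triple $(j,k,l)$ and adding the two lines, the terms $\pm\,\partial g_{jk}/\partial x^l$ cancel outright, while the two remaining $j$-derivative terms cancel by the symmetry of $g$. What survives is $2\,\partial g_{lj}/\partial x^k=0$. Hence every partial derivative of every component of $g$ vanishes, and on each connected coordinate chart one obtains $g_{ij}(x)=c_{ij}$ with $(c_{ij})$ a constant symmetric positive definite matrix, which is precisely the Euclidean (connection-flat) conclusion of the theorem.

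The only step deserving attention is the first one: one must notice that the map $A_{ljk}\mapsto g^{il}A_{ljk}$ is non-degenerate in the $l$-index, so that the apparently weaker PDE system (1) is in fact equivalent to the full vanishing of the Christoffel symbols of the first kind. Once that reduction is in hand, the cyclic-permutation identity is classical and the conclusion is immediate, so I do not anticipate a substantive obstacle. As a sanity check, one may recover the same statement even faster from the metric-compatibility identity $\partial_k g_{ij}=g_{mj}\Gamma^m_{ik}+g_{im}\Gamma^m_{jk}$, which collapses to $\partial_k g_{ij}=0$ the moment all $\Gamma^i_{jk}$ vanish; the two derivations are logically equivalent, and I would present the cyclic-permutation one because it works directly from (1) as written.
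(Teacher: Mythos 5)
Your proof is correct and takes essentially the same route as the paper's: invert $g^{il}$ to reduce (1) to the vanishing of the Christoffel symbols of the first kind, then invert the combinatorial part to conclude $\partial g_{rs}/\partial x^t=0$ and invoke connectedness. The only difference is that you carry out the second inversion explicitly via the cyclic-permutation identity, whereas the paper merely asserts that the tensor $T^{rst}_{ljk}$ is non-degenerate; your version supplies the detail the paper omits.
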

\begin{proof}
The previous non-linear PDEs are equivalent to the linear PDEs
$\frac{\partial g_{rs}}{\partial x^t}=0,$
since the matrices $g^{il}$ and  $T^{rst}_{ljk}=\delta^r_l \delta^s_j \delta^t_k + \delta^r_l \delta^s_k \delta^t_j - \delta^t_l \delta^r_j \delta^s_k$ are non-degenerate. Because $M$ is supposed to be connected, it follows the solutions $g_{ij}(x)= c_{ij}$.
\end{proof}

\subsection{Curvature-flat manifolds}
The curvature flatness was discussed in
\cite{C}, \cite{CV}, \cite{CVa}, \cite{TK}, \cite{TY}, \cite{V}
based on the idea of finding an adapted coordinate system.
We bring up another point of view, looking for suitable metrics and connections,
and not for adapted coordinate systems.

\subsubsection{Case of affine manifold}
Let $(M,\nabla)$ be an affine manifold, and $\Gamma^i_{jk}$, $i,j,k = \overline{1,n}$,
be the components of the symmetric connection $\nabla$.
The connection $\nabla$ determines the curvature tensor field $Riem^\nabla$ whose components are
$$R^l_{\,\,ijk}= \frac{\partial}{\partial x^j}\Gamma^l_{ik} - \frac{\partial}{\partial x^k}\Gamma^l_{ij}
+ \Gamma^l_{js} \Gamma^s_{ik} - \Gamma^l_{ks} \Gamma^s_{ij}$$
$$= \left(\delta^{p}_j \delta^s_k - \delta^{p}_k \delta^s_j\right)\left(\frac{\partial}{\partial x^p}\Gamma^l_{is}+ \Gamma^l_{pn} \Gamma^n_{is}\right)$$
$$= \left(\delta^{p}_j \delta^s_k - \delta^{p}_k \delta^s_j\right)\left(\frac{\partial}{\partial x^p}\Gamma^l_{is}- \Gamma^l_{sn} \Gamma^n_{ip}\right),$$
where $i,j,k,l = \overline{1,n}$. The operator
$P^{ps}_{jk}=\frac{1}{2}(\delta^{p}_j  \delta^s_k - \delta^{p}_k \delta^s_j)$ is a projection, i.e., $P^2=P$,
and is covariant constant.

The properties of curvature tensor are:
$$R^l_{\,\,ijk}+ R^l_{\,\,ikj}=0,\,\,R^l_{\,\,ijk}+ R^l_{\,\,jki}+ R^l_{\,\,kij}=0$$
$$R^l_{\,\,ijk,m}+R^l_{\,\,ikm,j}+R^l_{\,\,imj,k}=0 \,\,\,\hbox{(Bianchi)}$$
$$R^l_{\,\,ijk,m}+R^l_{\,\,kim,j}+R^l_{\,\,mkj,i}+R^l_{\,\,jmi,k}=0\,\,\,\hbox{(Veblen)}.$$

\begin{definition}
An affine manifold $(M,\nabla)$ is called curvature-flat if $Riem^\nabla =0$.
\end{definition}

It is wellknown the following \cite{C}, \cite{CV}, \cite{CVa}, \cite{V}
\begin{theorem}
The curvature tensor field $Riem^\nabla$ vanishes if and only if
the symmetric connection $\nabla$ is locally Euclidean, i.e., around each point of $M$
there exists a chart such that $\Gamma^i_{jk}=0$.
\end{theorem}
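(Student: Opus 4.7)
The ``if'' direction is immediate: in a chart where $\Gamma^i_{jk} \equiv 0$ the displayed formula for $R^l_{\,\,ijk}$ vanishes identically, and since $Riem^\nabla$ is a tensor its vanishing in one chart at a point implies its vanishing intrinsically. So the substance of the theorem is the converse, and my plan is to produce a local parallel frame and then straighten it into a coordinate frame using that $\nabla$ is torsion-free.

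Fix a point $p \in M$ and a basis $e_1,\ldots,e_n$ of $T_pM$. The first step is to construct, on a small neighborhood $U$ of $p$, vector fields $X_1,\ldots,X_n$ with $\nabla X_a = 0$ and $X_a(p) = e_a$. I would do this by parallel transporting $e_a$ along radial geodesics emanating from $p$ in a normal neighborhood; the resulting fields are smooth, and the key point is that $\nabla X_a = 0$ identically on $U$ (not merely along the radial rays). This global (on $U$) parallelism is where $Riem^\nabla = 0$ enters: the holonomy of $\nabla$ around any small loop is trivial, so parallel transport is path-independent, and the radial-transport definition of $X_a$ coincides with the one obtained by transporting along any path. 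Equivalently, the system $\nabla Y = 0$ with initial condition $Y(p) = e_a$ is a first-order linear PDE system whose integrability condition is exactly $R^l_{\,\,ijk} = 0$, hence solvable by Frobenius on any simply connected neighborhood.

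Once I have the parallel frame $\{X_a\}$, I invoke the torsion-free hypothesis: since $\nabla X_a = \nabla X_b = 0$,
\[
[X_a, X_b] = \nabla_{X_a} X_b - \nabla_{X_b} X_a = 0.
\]
A family of $n$ pointwise linearly independent, pairwise commuting vector fields on an $n$-manifold can be simultaneously straightened: there exist local coordinates $(x^1,\ldots,x^n)$ around $p$ with $X_a = \partial/\partial x^a$. In these coordinates $\nabla_{\partial_j}\partial_k = \nabla_{X_j} X_k = 0$, hence $\Gamma^i_{jk} = 0$ on the chart domain, which is the desired conclusion.

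The main technical obstacle is the first step, establishing that the parallel extension of the initial frame is well-defined and smooth on an open neighborhood rather than merely along a single path. This is precisely the integrability problem for the overdetermined linear system $\partial_j Y^l + \Gamma^l_{jk} Y^k = 0$, and the classical way to justify it is either (a) a direct Frobenius argument on $TM \oplus \cdots \oplus TM$, whose integrability condition reduces after a short computation to $R^l_{\,\,ijk} Y^k = 0$, or (b) a holonomy argument showing that $Riem^\nabla = 0$ forces trivial parallel transport around null-homotopic loops. Either route settles the theorem; the remaining steps (commutator computation and Frobenius straightening of commuting fields) are standard and routine.
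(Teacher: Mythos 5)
Your proof is correct and complete in outline: the Frobenius/holonomy argument producing a local parallel frame from $Riem^\nabla=0$, the use of vanishing torsion to show the frame fields commute, and the straightening of commuting fields into a coordinate chart is the standard classical argument, and you correctly isolate the one genuinely nontrivial step (integrability of the overdetermined system $\partial_j Y^l + \Gamma^l_{jk}Y^k=0$, whose obstruction is exactly $R^l_{\,\,ijk}Y^k$). For comparison: the paper offers no proof of this statement at all --- it labels the theorem as well known and merely cites Charlap, Charlap--Vasquez and Vasquez --- so your write-up actually supplies the argument the paper omits. The only cosmetic point worth tightening is that Frobenius already gives local solvability on a small ball, so the appeal to simple connectivity is not needed for the purely local conclusion the theorem asks for.
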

Intuitively, a curvature-flat manifold is one that "locally looks like Euclidean space".

The fact that the curvature tensor field of components $R^i_{\,\,jkl}$ is everywhere zero, can be interpreted
as a completely integrable (tensorial) PDEs system (see \cite{BU}, Bianchi conditions on curvature tensor)
$$(\delta^p_j \delta^s_k-\delta^p_k\delta^s_j)\frac{\partial}{\partial x^p}\Gamma^l_{is}= \Gamma^l_{ks} \Gamma^s_{ij} - \Gamma^l_{js} \Gamma^s_{ik},$$
having similar properties as curvature tensor field.
A constant connection of components $\Gamma^i_{jk}(x)=C^i_{jk}$ is solution if
$C^l_{ks} C^s_{ij} - C^l_{js} C^s_{ik}=0.$  Conversely, if $\Gamma^l_{ks} \Gamma^s_{ij} - \Gamma^l_{js} \Gamma^s_{ik}=0,$
then $\Gamma^l_{ik}(x)= \frac{\partial f^l_i}{\partial x^k}(x)$, with suitable matrix $f^l_i(x)$.

On the other hand, the same PDEs system can be written in two ways:
$$\left(\delta^{p}_j \delta^s_k - \delta^{p}_k \delta^s_j\right)\left(\frac{\partial}{\partial x^p}\Gamma^l_{is}+ \Gamma^l_{pn} \Gamma^n_{is}\right)=0$$
or
$$\left(\delta^{p}_j \delta^s_k - \delta^{p}_k \delta^s_j\right)\left(\frac{\partial}{\partial x^p}\Gamma^l_{is}- \Gamma^l_{sn} \Gamma^n_{ip}\right)=0.$$
Each system consists in $\frac{n^2(n^2-1)}{12}$ distinct  first order linear quadratic
PDEs whose unknowns are  $\frac{n^2(n+1)}{2}$ functions $\Gamma^i_{jk}$; for $n>7$,
overdetermined system; for $n<7$, undetermined system; for $n=7$, determined system.

There is no general algorithm for finding exact solutions of the previous system
but some methods used in other areas can be useful.

The trivial solution of the first previous PDEs system is $\Gamma^i_{jk}(x)=0$,
but there are also nonvanishing solutions as for example
$\Gamma^i_{jk}(x)=c^i_{jk}f(x)$, with convenient constant parameters $c^i_{jk}$ and
$\displaystyle f(x)= -\frac{1}{x^1+...+x^n +c}$. Also, given $c^i_{jk}$, we find
$\displaystyle c^l_{is}\frac{df}{f^2}+ c^l_{pn} c^n_{is}dx^p=0,$
and $f(x)$ is determined by the condition
$c^l_{is}= c^n_{is}(c^l_{1n} x^1+...+c^l_{nn} x^n)f(x).$

To the previous curvature flatness PDEs system, we attach two Riccati PDEs systems:
$$\frac{\partial}{\partial x^p}\Gamma^l_{is}(x)+ \Gamma^l_{pn}(x) \Gamma^n_{is}(x)=0,\leqno(2)$$
$$\frac{\partial}{\partial x^p}\Gamma^l_{is}(x)- \Gamma^l_{sn}(x) \Gamma^n_{ip}(x)=0.\leqno(3)$$
\begin{theorem}
The Riccati PDEs systems (2) and (3) are completely integrable.
\end{theorem}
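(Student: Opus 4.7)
The plan is to verify, for each system, the Frobenius compatibility condition $\partial_q\partial_p\Gamma^l_{is}=\partial_p\partial_q\Gamma^l_{is}$ by substituting the system into itself and checking that the resulting identity holds automatically; complete integrability in the sense of the classical Frobenius theorem for first-order overdetermined systems then follows.

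For system (2), the idea is to package the components into matrices $(A_s)^l_i:=\Gamma^l_{is}$, so that the equation reads $\partial_p A_s = -A_p A_s$. Differentiating once more and re-using the same system yields
\[
\partial_q\partial_p A_s \;=\; A_q A_p A_s + A_p A_q A_s,
\]
which is manifestly symmetric under $p\leftrightarrow q$, so the compatibility is automatic. The torsion-free constraint $(A_s)^l_i=(A_i)^l_s$ is moreover preserved along the flow, so the symmetric class of the unknown is not broken.

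For system (3) the bookkeeping is more delicate. The symmetry $\Gamma^l_{is}=\Gamma^l_{si}$ of the connection, substituted into (3), forces the algebraic consequence
\[
\Gamma^l_{sn}\Gamma^n_{ip}=\Gamma^l_{in}\Gamma^n_{sp}, \qquad (*)
\]
that is, the cubic form $\Gamma^l_{an}\Gamma^n_{bc}$ is symmetric in $(a,b)$. I would then compute $\partial_q\partial_p\Gamma^l_{is}$ by applying (3) twice, obtaining $\Gamma^l_{nm}\Gamma^m_{sq}\Gamma^n_{ip}+\Gamma^l_{sn}\Gamma^n_{pm}\Gamma^m_{iq}$, and the analogous expression for $\partial_p\partial_q\Gamma^l_{is}$ after swapping $p\leftrightarrow q$. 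The key step is to combine $(*)$ with the lower-index symmetry $\Gamma^l_{nm}=\Gamma^l_{mn}$ to rewrite each term of the form $\Gamma^l_{nm}\Gamma^m_{\cdot\cdot}\Gamma^n_{\cdot\cdot}$ into one of the form $\Gamma^l_{sm}\Gamma^m_{\cdot\cdot}\Gamma^n_{\cdot\cdot}$; after this the four cubic monomials pair up and cancel. The principal obstacle is precisely this reduction: one must first recognize that $(*)$ is a free consequence of (3) via the symmetry of $\Gamma$, and then deploy it at the correct positions in the four-term difference. Once this is carried out, both Riccati systems satisfy the Frobenius compatibility condition and are therefore completely integrable.
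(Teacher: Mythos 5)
Your treatment of system (2) is correct and is essentially the paper's own (very terse) argument made explicit: writing $(A_s)^l_i=\Gamma^l_{is}$ and $\partial_pA_s=-A_pA_s$, one gets $\partial_q\partial_pA_s=A_qA_pA_s+A_pA_qA_s$, which is symmetric in $p,q$ identically, and the right-hand side $-\Gamma^l_{pn}\Gamma^n_{is}$ is symmetric in $(i,s)$ whenever $\Gamma$ is, so the symmetric class is preserved. No complaints there.

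For system (3) there is a genuine gap. The relation $(*)$, namely $\Gamma^l_{sn}\Gamma^n_{ip}=\Gamma^l_{in}\Gamma^n_{sp}$, is \emph{not} an identity for symmetric $\Gamma$: in dimension $2$ take $\Gamma^1_{22}=\Gamma^2_{11}=1$ and all other components zero; then for $l=s=p=1$, $i=2$ the left side is $0$ while the right side is $\Gamma^1_{22}\Gamma^2_{11}=1$. So $(*)$ is a nontrivial quadratic constraint on the unknowns. You obtain it by substituting the symmetry of $\Gamma$ into (3), but that derivation presupposes that a solution through the given point exists --- which is precisely what complete integrability is supposed to deliver. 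The cleanest way to see the problem is that the right-hand side of (3) is not symmetric in $(i,s)$, so the system assigns two different values to $\partial_p\Gamma^l_{is}=\partial_p\Gamma^l_{si}$ at a generic symmetric $\Gamma$; equivalently, the Frobenius compatibility, which you correctly reduce to the four cubic monomials, fails off the subvariety cut out by $(*)$. Consequently your argument does not establish complete integrability of (3) in the unrestricted sense (a Cauchy problem with arbitrary symmetric initial data is in general unsolvable). To repair it you would have to restrict the Pfaffian system to the constraint variety $\{(*)\}$, verify that this variety is invariant under the system, and conclude integrability only for initial data satisfying $(*)$ --- a strictly weaker statement than the theorem as phrased. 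For what it is worth, the paper's own proof of the case (3) asserts $d\omega^l_{is}=a^{l\,\alpha\beta}_{is\,\gamma}\wedge\omega^\gamma_{\alpha\beta}$ without computing $d\omega$, and a direct computation modulo the ideal produces exactly the obstruction your four-term difference exhibits; so your calculation exposes, rather than resolves, a point the paper glosses over.
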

\begin{proof}
The Riccati PDEs system (2) is completely integrable since the second order partial derivatives are equal.

The Riccati PDEs system (3) is completely integrable via Frobenius Theorem for the Pfaff system
$$d\Gamma^l_{is}(x)- \Gamma^l_{sn}(x) \Gamma^n_{ip}(x)dx^p=0$$
(integrant factor). Indeed, the $1$-forms
$$\omega^l_{is}=\delta^l_m\delta^p_i\delta^q_s\,\,\, d\Gamma^m_{pq}(x)- \Gamma^l_{sn}(x) \Gamma^n_{ip}(x)dx^p,$$
on the space $\mathbb{R}^m,\,\,\, m= \frac{n^2(n+1)}{2}+n$, of coordinates $(\Gamma^m_{pq}, x^i)$, are smooth
pointwise linearly independent forms (since the matrix of the components has the
rank $p=\frac{n^2(n+1)}{2}$) and there exists the smooth $1$-forms $a^{l\,\,\,\alpha \beta}_{is\,\,\,\gamma}$
such that
$$d\omega^l_{is}= a^{l\,\,\,\alpha \beta}_{is\,\,\,\gamma}\wedge \omega^\gamma_{\,\,\,\alpha\beta}.$$
Complete integrability means conditions for the global existence of connections, via Cauchy problems;
foliation by maximal integral manifolds.
\end{proof}

The previous Riccati PDEs are not tensor equations. That is, they are not invariant
with respect to the changing of the system of coordinates. Particularly, they admit constant
connection solutions $\Gamma^i_{jk}(x) = C^i_{jk}$, soliton solutions
$\Gamma^i_{jk}(x) = \phi^i_{jk}(a_lx^l)$ etc.

\begin{theorem}
Any solution $\Gamma^i_{jk}(x)$ of one of the Riccati PDEs systems (2) and (3)
is a solution of the curvature-flatness PDEs system.
\end{theorem}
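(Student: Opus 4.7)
The plan is to read off the conclusion directly from the two equivalent coordinate expressions of the curvature tensor already displayed in the excerpt, namely
$$R^l_{\,\,ijk} = \left(\delta^{p}_j \delta^s_k - \delta^{p}_k \delta^s_j\right)\left(\frac{\partial}{\partial x^p}\Gamma^l_{is}+ \Gamma^l_{pn} \Gamma^n_{is}\right)$$
and
$$R^l_{\,\,ijk} = \left(\delta^{p}_j \delta^s_k - \delta^{p}_k \delta^s_j\right)\left(\frac{\partial}{\partial x^p}\Gamma^l_{is}- \Gamma^l_{sn} \Gamma^n_{ip}\right).$$
Each of the two Riccati systems (2) and (3) annihilates, respectively, the second factor in these two formulas \emph{before} the antisymmetrization in $(j,k)$ encoded by the projector $P^{ps}_{jk}$.

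First, suppose $\Gamma^i_{jk}(x)$ solves (2). Then the tensor $T^l_{ips}:=\partial_p\Gamma^l_{is}+\Gamma^l_{pn}\Gamma^n_{is}$ vanishes identically, so contracting it with the projector $(\delta^p_j\delta^s_k-\delta^p_k\delta^s_j)$ yields $R^l_{\,\,ijk}=0$. Symmetrically, if $\Gamma^i_{jk}(x)$ solves (3), the tensor $\tilde T^l_{ips}:=\partial_p\Gamma^l_{is}-\Gamma^l_{sn}\Gamma^n_{ip}$ is identically zero, and the same contraction yields $R^l_{\,\,ijk}=0$ via the second expression above.

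Thus in both cases the conclusion is an immediate consequence of the fact that a quantity which is zero as a $(1,3)$-tensor (before any antisymmetrization) remains zero after being contracted with $P^{ps}_{jk}$. No genuine obstacle is to be expected here: the Riccati systems (2) and (3) are \emph{stronger} than the curvature-flatness PDEs system, in the sense that they set to zero not only the antisymmetric parts $(\partial_p\Gamma^l_{is}+\Gamma^l_{pn}\Gamma^n_{is})_{[jk\text{-antisym}]}$ (and analogously for the second form), but also the symmetric parts. The only thing worth emphasizing in the write-up is precisely this remark, which explains why the solution sets of (2) and (3) are, generically, strictly smaller than the solution set of the curvature-flatness system --- a nontrivial piece of information that justifies the change of viewpoint advertised at the beginning of the subsection.
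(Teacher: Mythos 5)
Your argument is correct and coincides with the paper's own (the paper in fact states this theorem without a written proof, treating it as immediate from the displayed factorization $R^l_{\,\,ijk}=(\delta^p_j\delta^s_k-\delta^p_k\delta^s_j)(\partial_p\Gamma^l_{is}+\Gamma^l_{pn}\Gamma^n_{is})$ and its companion, which is exactly what you use). The only nitpick is that you call $T^l_{ips}$ a ``tensor'' when it is merely a coordinate-dependent array --- the paper itself stresses that (2) and (3) are not tensor equations --- but this does not affect the validity of the contraction argument.
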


\begin{corollary}
Suppose $\Gamma^l_{is}(x)$ is a solution of the PDEs system (2) and $T^l_{is}(x)$ is an arbitrary tensor field. The connection  $\Gamma^l_{is}(x)+T^l_{is}(x)$ is a solution of the PDEs system (2) if and only if
$$\frac{\partial}{\partial x^p}T^l_{is}(x)+ \Gamma^l_{pn}(x) T^n_{is}(x)+T^l_{pn}(x) \Gamma^n_{is}(x)+T^l_{pn}(x) T^n_{is}(x) =0$$
(Riccati PDEs).
\end{corollary}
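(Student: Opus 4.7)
The plan is straightforward: substitute the candidate connection $\Gamma^l_{is}(x)+T^l_{is}(x)$ directly into the Riccati system (2) and expand. Writing $\widetilde{\Gamma}^l_{is} := \Gamma^l_{is}+T^l_{is}$, the requirement that $\widetilde{\Gamma}$ solve (2) becomes
$$\frac{\partial}{\partial x^p}\bigl(\Gamma^l_{is}+T^l_{is}\bigr) + \bigl(\Gamma^l_{pn}+T^l_{pn}\bigr)\bigl(\Gamma^n_{is}+T^n_{is}\bigr) = 0.$$
Distributing the quadratic product yields six terms: the pure $\Gamma$-part $\partial_p\Gamma^l_{is}+\Gamma^l_{pn}\Gamma^n_{is}$, the two mixed cross terms $\Gamma^l_{pn}T^n_{is}$ and $T^l_{pn}\Gamma^n_{is}$, the pure $T$-derivative $\partial_p T^l_{is}$, and the quadratic tail $T^l_{pn}T^n_{is}$.

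Next I would invoke the hypothesis that $\Gamma^l_{is}$ already satisfies (2), so the pure $\Gamma$-part vanishes identically. What remains is precisely
$$\frac{\partial}{\partial x^p}T^l_{is}+ \Gamma^l_{pn}T^n_{is}+T^l_{pn}\Gamma^n_{is}+T^l_{pn}T^n_{is} = 0,$$
and the equivalence claimed in the corollary is immediate: $\widetilde{\Gamma}$ solves (2) iff this displayed equation holds. Since the substitution and the cancellation are valid pointwise, no integrability or regularity argument is needed beyond the smoothness of $T^l_{is}$ already assumed through the phrase ``tensor field''.

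There is essentially no obstacle; the only remark worth making is interpretive rather than technical. The resulting equation is again of Riccati type in $T^l_{is}$, but with a linearization (the two cross terms) dictated by the background connection $\Gamma$; it can be rewritten as $\nabla^{\Gamma}_{\!p}T^l_{is}+T^l_{pn}T^n_{is}=0$ once one views the cross terms as the action of the connection $\Gamma$ on the indices of $T$ in the appropriate way. That rewriting is optional, but it clarifies why the statement is natural: perturbing a solution of (2) by a tensor field $T$ leaves the system invariant exactly when $T$ satisfies a background-coupled Riccati equation of the same shape.
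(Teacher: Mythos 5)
Your proof is correct and is exactly the argument the paper intends: the corollary is stated without proof precisely because it follows by substituting $\Gamma^l_{is}+T^l_{is}$ into system (2), expanding the six terms, and cancelling the pure $\Gamma$-part using the hypothesis that $\Gamma$ solves (2). The only caveat is your optional closing remark: the cross terms $\Gamma^l_{pn}T^n_{is}+T^l_{pn}\Gamma^n_{is}$ are \emph{not} the covariant derivative $\nabla^{\Gamma}_{p}T^l_{is}$ of the $(1,2)$-tensor $T$ (which would involve $-\Gamma^n_{pi}T^l_{ns}-\Gamma^n_{ps}T^l_{in}$ rather than $T^l_{pn}\Gamma^n_{is}$), but since you flag that rewriting as interpretive and do not use it, the proof itself is unaffected.
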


\begin{theorem}
Suppose $\nabla$ is a symmetric connection of coefficients $\Gamma^i_{jk}$.
The following statements are equivalent.

(i) The affine manifold $(M,\nabla)$ is curvature-flat, i.e., $R^l_{\,\,ijk}(x)=0$.

(ii) For each point $x\in M$, there exists a chart such that $\Gamma^i_{jk}(x)=0$.

(iii) For each point $x\in M$, there exists a chart such that
$$\frac{\partial}{\partial x^p}\Gamma^l_{is}(x)=0,\,\,\Gamma^l_{pn}(x) \Gamma^n_{is}(x)=0.$$

(iv) For each point $x\in M$, there exists a chart such that
$$\frac{\partial}{\partial x^p}\Gamma^l_{is}(x)+ \Gamma^l_{pn}(x) \Gamma^n_{is}(x)=0.$$
\end{theorem}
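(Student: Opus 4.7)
The plan is to prove the four equivalences by the circular chain $(\text{ii}) \Rightarrow (\text{iii}) \Rightarrow (\text{iv}) \Rightarrow (\text{i}) \Rightarrow (\text{ii})$, thereby reducing the essential content to two previously established facts: Theorem 2.2, which supplies the classical direction $(\text{i}) \Leftrightarrow (\text{ii})$, and Theorem 2.5, which handles the substantive step $(\text{iv}) \Rightarrow (\text{i})$. The remaining two implications are formal.

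First I would observe that $(\text{ii}) \Rightarrow (\text{iii})$ is immediate: in any chart in which $\Gamma^i_{jk}$ vanishes identically on a neighborhood of the chosen point, both the partial derivatives $\partial_p \Gamma^l_{is}$ and the quadratic products $\Gamma^l_{pn}\Gamma^n_{is}$ vanish identically in that chart. The implication $(\text{iii}) \Rightarrow (\text{iv})$ is also a one-line check, since the equation in (iv) is exactly the sum of the two equations listed in (iii).

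For the core step $(\text{iv}) \Rightarrow (\text{i})$ I would argue as follows. Fix $x \in M$ and a chart around $x$ in which $\Gamma^l_{is}$ satisfies the Riccati system (2). By Theorem 2.5 the components $R^l_{\,\,ijk}$ vanish in that chart; since the vanishing of $R^l_{\,\,ijk}$ is a tensorial condition it is chart-independent, and applying this at every point yields $Riem^\nabla = 0$ on $M$. Alternatively, one may substitute (iv) directly into the factorisation $R^l_{\,\,ijk} = (\delta^{p}_j \delta^s_k - \delta^{p}_k \delta^s_j)(\partial_p \Gamma^l_{is} + \Gamma^l_{pn} \Gamma^n_{is})$ displayed in subsection 2.2.1 and read off the vanishing without any appeal to Theorem 2.5. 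Finally, $(\text{i}) \Rightarrow (\text{ii})$ is precisely the statement of Theorem 2.2, which closes the loop.

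I expect no substantial obstacle: the only genuinely geometric ingredient is $(\text{iv}) \Rightarrow (\text{i})$, and this has already been isolated as Theorem 2.5. The rest of the argument is a matter of organising the chain of implications correctly and exploiting the tensorial nature of $Riem^\nabla$ to transfer the pointwise, chart-dependent conclusions of (ii)--(iv) to the global, coordinate-free statement (i).
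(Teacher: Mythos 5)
Your proof is correct and follows essentially the same route as the paper's: the closed cycle $(i)\Rightarrow(ii)\Rightarrow(iii)\Rightarrow(iv)\Rightarrow(i)$, with the classical theorem supplying $(i)\Rightarrow(ii)$, the two trivial implications, and the factorisation $R^l_{\,\,ijk}=(\delta^{p}_j \delta^s_k - \delta^{p}_k \delta^s_j)(\partial_p \Gamma^l_{is} + \Gamma^l_{pn} \Gamma^n_{is})$ supplying $(iv)\Rightarrow(i)$ (this is the paper's Theorem 2.4, which you cite with a slightly off label). Your explicit observation that the vanishing of $Riem^\nabla$ is tensorial, so the chart-dependent pointwise conclusion transfers to the global statement, is a detail the paper leaves implicit.
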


\begin{proof} The diagram
$$\begin{array}{lcr} (i)&\rightarrow &(ii)\\
\uparrow &  & \downarrow\\
(iv)&\leftarrow &(iii)
\end{array}$$
is closed. Indeed $(i)\Rightarrow (ii)$ is the classical Theorem 2.2. Then the implications
$(ii)\Rightarrow (iii)\Rightarrow (iv)$ are automatically. The implication $(iv)\Rightarrow (i)$
is the Theorem 2.4.
\end{proof}

\begin{remark}
An equivalent Theorem can be formulated using the Riccati PDEs system (3). In this sense
the Theorem 2.5 can be replaced by a new Theorem adding the following two statements:

(v) for each point in $M$, there exists a chart with local coordinates
($x^1,...,x^n$) such that $  \nabla_{\partial_{x^i}}  \nabla_{\partial_{x^j}} \partial_{x^k} =0$,
for all indices $i,j,k = \overline{1,n}$;

(vi) for each point in $M$, there exists a chart with local coordinates
($x^1,...,x^n$) such that
$$\frac{\partial \Gamma^l_{is}}{\partial x^p} - \Gamma^l_{sn} \Gamma^n_{ip} = 0.$$

Obviously, property (v) is equivalent to property (iv).
The proof that (vi) is equivalent with (i)-(iv) is similar to the previous one.

The property (vi) is quite strange, as it is not derived (like (iv)) from a double covariant derivative (like (v)).
It deserves a deeper investigation on its own.

\begin{example}
Let $M$ be an $n$-dimensional parallelizable manifold
and let $ E_1, ..., E_n$ be  a basis of vector fields on $M$. One knows there
exists a unique affine connection $\nabla^{(-)}$ on $M$ such that $ \nabla^{(-)}_{E_i} E_j = 0$
for any $i,j= \overline{1,n}$. This Cartan-Schouten connection is flat but, in general, it is not symmetric.

We look for the affine connections satisfying
$$ \nabla_{E_i}\nabla_{E_j} E_k = 0,\leqno (4)$$
for any $i,j,k= \overline{1,n}$. Obviously, $\nabla^{(-)}$ is such a connection.

We derive that $\nabla$ satisfies (4) iff its components w.r.t the given basis verify
$$ E_i(\Gamma^l_{jk})+ \Gamma^s_{jk} \Gamma^l_{is} =0, \leqno (5)$$
for any $i,j,k,l= \overline{1,n}$.

The relation (5) extends the property (iv) from the Theorem 2.5.
\end{example}

\begin{example} In particular, let $G$ be an $n$-dimensional Lie group and
$E_1$, ..., $E_n$ be  a basis of left invariant vector fields in the Lie algebra
$L(G)$. Suppose moreover that the connections are left invariant. Then the condition (5) writes
$$ \Gamma^s_{jk} \Gamma^l_{is} =0,\leqno(6)$$ for any $i,j,k,l= \overline{1,n}$.

This shows that the set of all such (left invariant) connections is a "universal"
generalized cone in $\mathbb{R}^{n^3}$, whose vertex is $\nabla^{(-)}$.

In this particular case, the relation (6) corresponds to the properties (iii)
and (iv) from the Theorem 2.5 (it is not more general nor particular, it is similar!).
\end{example}
\end{remark}

{\bf Open problem}. What happens if the connection $\nabla$ is not symmetric?
\medskip

The general solution for a PDE is not useful for solving particular problems with physical content.
However, it may be pointed out that some solutions with a certain generality have played very
important roles in the discussion of physical problems (for example solutions of Einstein PDEs,
soliton solutions etc).

\subsubsection{Case of Riemannian manifold}
Let $(M, g = (g_{ij}))$ be a Riemannian manifold. The Riemannian metric $(g_{ij})$
determines the Riemannian curvature tensor field $Riem^g$ of components
$$R_{ijkl} =\frac{1}{2}\left(\frac{\partial^2 g_{ik}}{\partial x^j\partial x^l}+\frac{\partial^2 g_{jl}}{\partial x^i\partial x^k}  - \frac{\partial^2 g_{jk}}{\partial x^i\partial x^l}-\frac{\partial^2 g_{il}}{\partial x^j\partial x^k}\right)
+g_{mn}(\Gamma^m_{jk}\Gamma^n_{il}- \Gamma^m_{jl}\Gamma^n_{ik})$$
$$=\frac{1}{2} \delta^p_{[i} \delta^q_{j]} \delta^r_{[k} \delta^s_{l]} \,\,\frac{\partial^2 g_{pr}}{\partial x^q \partial x^s}+g_{mn}\delta^q_j\delta^p_i \delta^r_{[k} \delta^s_{l]}\Gamma^m_{qr} \Gamma^n_{ps},$$
where
$$\delta^p_{[i} \delta^q_{j]}=\delta^p_{i} \delta^q_{j}-\delta^p_{j} \delta^q_{i},\,\, \Gamma^i_{jk}=\frac{1}{2}g^{il}(\delta^r_l \delta^s_j \delta^t_k + \delta^r_l \delta^s_k \delta^t_j - \delta^t_l \delta^r_j \delta^s_k)\frac{\partial g_{rs}}{\partial x^t}.$$

Properties:
$$R_{ijkl}= R_{klij},\,\,\,R_{ijkl}=-R_{ijlk}=-R_{jikl}$$
$$R_{ijkl}+ R_{iljk}+R_{iklj}=0\,\,\,\hbox{(first Bianchi identity)}$$
We do not mention here the 2nd Bianchi identity
since it involves covariant derivatives and hence it is an equation
not only on the Riemannian curvature tensor field but also on the original metric.

In this case Riemannian curvature flatness condition means the tensorial PDEs system
$$\frac{1}{2} \delta^p_{[i} \delta^q_{j]} \delta^r_{[k} \delta^s_{l]} \,\,\frac{\partial^2 g_{pr}}{\partial x^q \partial x^s}+g_{mn}\delta^q_j\delta^p_i \delta^r_{[k} \delta^s_{l]}\Gamma^m_{qr} \Gamma^n_{ps}=0,$$
on $S^2_+T^*M$, with $\frac{n^2(n^2-1)}{12}$ distinct second order linear non-homogeneous PDEs whose unknowns
are $\frac{n(n+1)}{2}$ functions $g_{ij}$ (positive definite tensor);
for $n<3$, undetermined system; for $n>3$, overdetermined system; for $n=3$, determined system. This PDEs system is
parabolic since the set of eigenvalues of the matrix $T^{pqrs}_{ijkl}=\delta^p_{[i} \delta^q_{j]} \delta^r_{[k} \delta^s_{l]}$
(tensorial product of a matrix by itself) contains the eigenvalue $0$.
Indeed all eigenvectors, respectively eigenvalues are: $X^{ijkl}$-symmetric in $(i,j)$ or in $(k,l)$,
with $\lambda=0$; $X^{ijkl}$-skewsymmetric in $(i,j)$ and in $(k,l)$ with $\lambda=2$. Of course,
this PDEs system has all properties of curvature tensor field.

Any solution $g_{ij}$ (positive definite tensor) of the strong PDEs system
$$\frac{1}{2} \delta^p_{[i} \delta^q_{j]} \,\,\frac{\partial^2 g_{pr}}{\partial x^q \partial x^s}+g_{mn} \Gamma^m_{jr} \Gamma^n_{is}=0,$$
on $S^2_+T^*M$, is a solution of the curvature flatness PDE system. This PDEs system is parabolic since
the set of eigenvalues of the projection
$P^{pq}_{ij}= \frac{1}{2} \delta^p_{[i} \delta^q_{j]}$
contains $0$. Indeed all the eigenvectors and eigenvalues  are: $X^{ij}$-symmetric with $\lambda=0$
and $X^{ij}$-skewsymmetric with $\lambda=1$.

The trivial solution of the previous PDEs system is $g_{ij}(x) = c_{ij}$,
but there are also nonconstant solutions as for example
$g_{ij}(x) = c_{ij}f(x)$, with convenient function $f(x)$,
soliton solutions $g_{ij}(x) = \phi_{ij}(a_kx^k)$ etc.

\subsection{Solution via normal coordinates}

Let $(M,\nabla)$ be  a differentiable manifold equipped with a symmetric affine connection.
Normal coordinates at a point $p$ are a local coordinate system in a neighborhood of $p$ obtained by
applying the exponential map to the tangent space at $p$. In a
normal coordinate system, the components $\Gamma^i_{jk}$ of the connection
vanish at the point $p$, thus often simplifying local judgments.

In normal coordinates associated to the Levi-Civita connection of a
Riemannian manifold $(M,g)$, one can additionally arrange that the
metric tensor is the Kronecker delta at the point $p$, and
that the first partial derivatives of the metric at $p$ vanish.

Giving $R_{ijkl}$, we look to solve the PDEs
$$\frac{1}{2} \delta^p_{[i} \delta^q_{j]} \delta^r_{[k} \delta^s_{l]} \,\,\frac{\partial^2 g_{pr}}{\partial x^q \partial x^s}+g_{mn}\delta^q_j\delta^p_i \delta^r_{[k} \delta^s_{l]}\Gamma^m_{qr} \Gamma^n_{ps}=R_{ijkl},$$
trying for series solution
$$g_{jl}(x)= g_{jl}(0)+ \frac{\partial g_{jl}}{\partial x^k}(0)x^k +\frac{1}{2}\frac{\partial^2 g_{jl}}{\partial x^i \partial x^k}(0)x^ix^k +...$$

In the center of normal coordinates, the components of curvature tensor field $Riem^g$ are
$$R_{ijkl}(0) =\frac{1}{2}\left(\frac{\partial^2 g_{ik}}{\partial x^j\partial x^l}+\frac{\partial^2 g_{jl}}{\partial x^i\partial x^k}  - \frac{\partial^2 g_{jk}}{\partial x^i\partial x^l}-\frac{\partial^2 g_{il}}{\partial x^j\partial x^k}\right)(0).$$
Now we observe that the Cauchy problem
$$\partial^2_{ik}g_{jl}=\frac{1}{3}\left(R_{ijkl} + R_{ilkj}\right),\,\,\,g_{ij}(0)=\delta_{ij},\,\,\,\partial_kg_{ij}(0)=0$$
determine a solution $g_{jl}(x)$.

To summarize, given any tensor $R_{ijkl}$ satisfying the symmetries stated above, any smooth metric of the form
$$g_{jl}(x)= g_{jl}(0)+\frac{1}{3}\left(R_{ijkl} + R_{ilkj}\right)x^ix^k + O(|x|^3)$$
has curvature tensor $R_{ijkl}$ at $x=0$. This solution was commented
in mathematical discussions on MathOverflow, 2019
(Riemann's formula for the metric in a normal neighborhood ...).

\begin{remark} Suppose $(M,g)$ is a Riemannian manifold with $n=dim M \geq 4$.
If the curvature tensor vanishes identically, then the Riemannian manifold
is conformally flat. This statement is wellknown and is based on the Weyl curvature tensor field
$$C_{iklm}= R_{iklm} + \frac{1}{n-2}(R_{im}g_{kl}-R_{il}g_{km}+R_{kl}g_{im}-R_{km}g_{il})$$
$$+\frac{R}{(n-1)(n-2)}(g_{il}g_{km}-g_{im}g_{kl}).$$
For dimensions $2$ and $3$, the Weyl curvature tensor vanishes identically.
 For dimensions $\geq 4$, the Weyl curvature is generally nonzero.
 If the Weyl tensor vanishes in dimension $\geq 4$, then the metric is
 locally conformally flat: there exists a local coordinate system in which
 the metric tensor is proportional to a constant tensor.
\end{remark}

The question of whether a geometric manifold is flat depends on its structure (metric, affine connection)
and only indirectly on its topology. A topological space equipped with one metric may be flat,
but equipped with another it may not be.

\subsection{Ricci-flat manifolds}
Ricci flatness was described in
\cite{DHK}, \cite{FW}, \cite{SKM}, \cite{G}, \cite{W}, \cite{ZK}, \cite{HeHe}
underlining locally the difference between an "Euclidean ball" and a "geodesic ball".

In Physics, Ricci-flat manifolds represent vacuum solutions to the analogues
of Einstein's equations for Riemannian manifolds of any dimension,
with vanishing cosmological constant.

\subsubsection{Case of equiaffine manifold}
A torsion-free affine connection $\nabla$, of components $\Gamma^i_{jk}$, is called (locally) equiaffine if
locally there is a volume form (nonvanishing $n$-form) $\omega$ that is parallel with respect to $\nabla$.
An affine connection $\nabla$ with zero torsion is Ricci-symmetric if and only if $\nabla$ is locally equiaffine.

Let $(M,\nabla)$ be an equiaffine manifold. The components $R_{ik}$ of the Ricci tensor field $Ric^\nabla$
are obtained by the contraction of the first and third indices of the curvature tensor field $R^l_{\,\,ijk}$, i.e.,
$$R_{ik} = R^l_{\,\,ilk}= \frac{\partial}{\partial x^l}\Gamma^l_{ik} - \frac{\partial}{\partial x^k}\Gamma^l_{il}
+ \Gamma^l_{ls} \Gamma^s_{ik} - \Gamma^l_{ks} \Gamma^s_{il}$$
$$=\left(\delta^{p}_q \delta^s_k - \delta^{p}_k \delta_q^s  \right)\left(\frac{\partial}{\partial x^p}\Gamma^q_{is}+
\Gamma^q_{pn} \Gamma^n_{is}\right)$$
$$= \left(\delta^{p}_q \delta^s_k - \delta^{p}_k \delta^s_q\right)\left(\frac{\partial}{\partial x^p}\Gamma^q_{is}- \Gamma^q_{sn} \Gamma^n_{ip}\right).$$

\begin{definition}
An equiaffine manifold $(M,\nabla)$ is called Ricci-flat if  $Ric^\nabla =0$.
\end{definition}

On the other hand, the condition that the Ricci curvature vanishes, either
$$\left(\delta^{p}_q \delta^s_k - \delta^{p}_k \delta_q^s  \right)\left(\frac{\partial}{\partial x^p}\Gamma^q_{is}+
\Gamma^q_{pn} \Gamma^n_{is}\right)=0$$
or
$$\left(\delta^{p}_q \delta^s_k - \delta^{p}_k \delta^s_q\right)\left(\frac{\partial}{\partial x^p}\Gamma^q_{is}- \Gamma^q_{sn} \Gamma^n_{ip}\right)=0$$
is a system of $\frac{n(n+1)}{2}$ distinct first order divergence
quadratic tensorial PDEs with $\frac{n^2(n+1)}{2}$ unknown functions $\Gamma^i_{jk}$;
for $n>1$, undetermined system; for $n=1$, determined system.
Here $\mathcal{P}^{ps}_{qk}=\delta^{p}_q \delta^s_k - \delta^{p}_k \delta_q^s $ works like a trace between $p$ and $q$,
in order to produce a divergence term. This operator is  associated to the projection $P$.
Any divergence PDE represents a conservation law.
\begin{theorem}
Any solution $\Gamma^i_{jk}$ of the completely integrable Riccati PDE systems (2) or (3)
is a solution of the Ricci-flatness PDE system.
\end{theorem}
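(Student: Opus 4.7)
The plan is to reduce this to Theorem 2.4, which already asserts that any solution of the Riccati PDEs systems (2) or (3) makes the full curvature tensor $R^l_{\,\,ijk}$ vanish. Since the Ricci components $R_{ik}$ are obtained from $R^l_{\,\,ijk}$ by contraction of the first index $l$ with the second index $j$, vanishing of the full curvature tensor trivially entails vanishing of its contractions. So the conceptual content of the statement is just: curvature-flatness implies Ricci-flatness, and the previous theorem has already supplied curvature-flatness for us.

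Concretely, I would proceed as follows. First, recall from Theorem 2.4 that if $\Gamma^i_{jk}$ solves (2) or (3), then $R^l_{\,\,ijk}(x)=0$ on $M$. Second, using the defining identity $R_{ik}=R^l_{\,\,ilk}$, which is exactly what the author wrote in the opening of Subsection 2.3.1, conclude $R_{ik}(x)=0$, i.e., $\Gamma^i_{jk}$ also solves the Ricci-flatness PDE system.

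As an alternative, essentially self-contained argument, I would bypass Theorem 2.4 and work directly from the two displayed formulas for $R_{ik}$ given just above the theorem. Namely, if $\Gamma^i_{jk}$ satisfies (2), then for every choice of indices (in particular after renaming $l$ to $q$),
\begin{equation*}
\frac{\partial}{\partial x^p}\Gamma^q_{is}+\Gamma^q_{pn}\Gamma^n_{is}=0.
\end{equation*}
Multiplying this identity by the trace-type operator $\mathcal{P}^{ps}_{qk}=\delta^{p}_q \delta^s_k - \delta^{p}_k \delta^s_q$ yields $R_{ik}=0$ pointwise. The same argument applied to the second expression for $R_{ik}$ and to the Riccati system (3) handles that case. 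No regularity or integrability hypothesis beyond what is assumed in the definition of a solution is needed.

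There is really no serious obstacle: once one recognizes that the Ricci PDE is an algebraic consequence (via contraction) of the curvature PDE, and that both Riccati systems (2) and (3) have been shown to force curvature-flatness, the theorem is immediate. The only thing that needs a touch of care is the bookkeeping of indices between the two equivalent expressions for $R^l_{\,\,ijk}$ and $R_{ik}$ written by the author, so as to confirm that the projection/trace operator $\mathcal{P}^{ps}_{qk}$ acting on the Riccati expressions really reproduces the contracted curvature. That verification is pure index juggling and poses no difficulty.
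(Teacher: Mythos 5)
Your proposal is correct: the paper itself states this theorem without proof, but the intended argument is plainly the one you give, since the displayed formulas immediately preceding the theorem already write $R_{ik}$ as the trace operator $\mathcal{P}^{ps}_{qk}$ applied to exactly the Riccati expressions of systems (2) and (3), so their vanishing forces $R_{ik}=0$. Both of your routes (contracting the conclusion of Theorem 2.4, or applying $\mathcal{P}$ directly to the vanishing Riccati expression) are valid and amount to the same contraction observation.
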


\begin{theorem} Let $\nabla$ be an arbitrary equiaffine connection with components $\Gamma^i_{jk}$.
Let $\Omega_+$ be the set of partial derivative operators
(exotic objects) of the form $\frac{\partial}{\partial x^p}\Gamma^q_{is}+
\Gamma^q_{pn} \Gamma^n_{is}$, and $\Omega_-$ be the set of partial derivative operators
(exotic objects) of the form $\frac{\partial}{\partial x^p}\Gamma^q_{is}- \Gamma^q_{sn} \Gamma^n_{ip}$.
Let $P$ be the previous projection and $\mathcal{P}$ the associated trace.
Let $\hbox{Tr}$  be the trace with respect to indices $i$ and $k$. Then $\mathcal{P}= Tr\circ P$.
\end{theorem}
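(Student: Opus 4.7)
My plan is to reduce the identity $\mathcal{P}=\hbox{Tr}\circ P$ to the classical index-level fact that the Ricci tensor is a contraction of the Riemann tensor. Both operators act on the space $\Omega_+$ of ``exotic'' expressions $\Theta^q_{ips}=\frac{\partial}{\partial x^p}\Gamma^q_{is}+\Gamma^q_{pn}\Gamma^n_{is}$, so it suffices to show the two operators agree on an arbitrary $\Theta\in\Omega_+$, which I would do by direct index manipulation.

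First I would reread from Section 2.2.1 the factorization
$$R^l_{\,\,ijk}=(\delta^p_j\delta^s_k-\delta^p_k\delta^s_j)\,\Theta^l_{ips}=2\,P^{ps}_{jk}\,\Theta^l_{ips},$$
which exhibits the Riemann tensor, up to the constant $2$, as the image of $\Theta$ under the projection $P^{ps}_{jk}=\frac{1}{2}(\delta^p_j\delta^s_k-\delta^p_k\delta^s_j)$ that antisymmetrizes the input slot-pair $(p,s)$ into the output slot-pair $(j,k)$. This step is a translation of the curvature formula into $P$-projection language; the point is to isolate $P$ as the only object acting on $\Theta$ on the right-hand side.

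Next I would apply $\hbox{Tr}$ by contracting the upper index $l$ with the second lower index of $R^l_{\,\,ijk}$ (playing the role of $j$), so that only $i$ and $k$ remain free --- which is what the theorem's phrase ``trace with respect to indices $i$ and $k$'' refers to. The Ricci tensor $R_{ik}=R^l_{\,\,ilk}$ then becomes
$$R_{ik}=2\,P^{ps}_{lk}\,\Theta^l_{ips}=(\delta^p_l\delta^s_k-\delta^p_k\delta^s_l)\,\Theta^l_{ips},$$
and relabelling the dummy $l\to q$ this is exactly $\mathcal{P}^{ps}_{qk}\,\Theta^q_{ips}$, which is by definition $\mathcal{P}$ applied to $\Theta$. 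Comparing the initial and final expressions delivers $\mathcal{P}=\hbox{Tr}\circ P$ as a pointwise equality of operators on $\Omega_+$.

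The only delicate point --- a bookkeeping nuisance rather than a genuine obstacle --- is tracking the factor $2$ between $P$ and $\mathcal{P}$ and verifying that the trace $\hbox{Tr}$ is paired with precisely that slot of $P$ which plays the role of $j$ in the Riemann tensor (and of $q$ in $\mathcal{P}$). Once the slot correspondence $(p,s;j,k)$ for $P$ versus $(p,s;q,k)$ for $\mathcal{P}$ is fixed, the identity drops out from the coincidence of the two index-level formulas above, and the parallel verification with $\Omega_-$ using the alternative factorization $(\delta^p_j\delta^s_k-\delta^p_k\delta^s_j)(\partial_p\Gamma^l_{is}-\Gamma^l_{sn}\Gamma^n_{ip})$ proceeds identically.
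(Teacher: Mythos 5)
Your proposal is correct and follows essentially the same route as the paper: the paper's entire proof is the commutative diagram $\Omega_\pm \to R^i_{\,\,jkl} \to R_{jl}$ expressing that the Ricci tensor is the $(l,j)$-contraction of the Riemann tensor, with both maps factoring through the exotic objects, and your index computation $R^l_{\,\,ilk}=(\delta^p_l\delta^s_k-\delta^p_k\delta^s_l)\Theta^l_{ips}=\mathcal{P}^{ps}_{qk}\Theta^q_{ips}$ is exactly the verification that diagram encodes. You merely supply the explicit index bookkeeping (including the factor $2$ between $P$ and $\mathcal{P}$) that the paper leaves implicit.
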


\begin{proof} The diagram
$$\begin{array}{lcr} \Omega_\pm&\stackrel{P}\rightarrow &R^i_{\,\,jkl}\\
\mathcal{P}\searrow  &  & \swarrow Tr\\
&R_{jl} &
\end{array}$$
clarifies the statement in the Theorem.
\end{proof}

{\bf Open problem}. What happens if the connection $\nabla$ is not equiaffine?

\subsubsection{Case of Riemannian manifold}
In case that $(M,g=(g_{ij}))$ is a Riemannian manifold, the Ricci tensor field $Ric^g$ has the components
$$R_{ik} = \frac{\partial \Gamma^l_{ik}}{\partial x^l}-\Gamma^m_{il} \Gamma^l_{km}-\nabla_k\left(\frac{\partial}{\partial x^i}   \left(\ln \sqrt{\det(g_{mn})}\right)\right),$$
where
$$\Gamma^i_{jk}=\frac{1}{2}g^{il}(\delta^r_l \delta^s_j \delta^t_k + \delta^r_l \delta^s_k \delta^t_j - \delta^t_l \delta^r_j \delta^s_k)\frac{\partial g_{rs}}{\partial x^t}.$$
Now Ricci-flat manifold
$$\frac{\partial \Gamma^l_{ik}}{\partial x^l}-\Gamma^m_{il} \Gamma^l_{km}-\nabla_k\left(\frac{\partial}{\partial x^i}   \left(\ln \sqrt{\det(g_{mn})}\right)\right)=0$$
means $\frac{n(n+1)}{2}$ distinct PDEs with $\frac{n(n+1)}{2}$ unknown functions $g_{ij}$, on $S^2_+T^*M$.
They are special cases of Einstein manifolds, where the cosmological constant vanish.

Since Ricci curvature in a Riemannian manifold measures the amount by which the volume of a
small geodesic ball deviates from the volume of a ball in Euclidean space.
This statement is made precise via the formula
$$\omega_{1,2,...,n}(x)=\left(1-\frac{1}{6}R_{ij}(m)x^ix^j + O(\|x\|^3)\right)\omega_{1,2,...,n}(m),$$
(see Gray \cite{G}), where $\omega$ is the metric volume $n$-form for the metric tensor $g$ defined on $M$ and
Riemann normal coordinates are used.

\begin{remark}
Let $(M,g_{ij})$ be a Riemannian manifold.
If the dimension of $M$ is $3$, then Ricci flatness implies Riemann flatness.

This statement is wellknown and it is based on the fact that
for $3$-dimensional Riemannian manifolds $(M,g_{ij})$, we have
$$R_{ijkl}= R_{ik}g_{jl}-R_{il} g_{jk} + g_{ik} R_{jl}- g_{il}R_{jk}-\frac{R}{2}(g_{ik} g_{jl}- g_{il} g_{jk}).$$
\end{remark}

\subsubsection{Solution in harmonic coordinates}

Harmonic coordinates in higher dimensions were developed initially in the context of
general relativity by Einstein, and then by Lanczos to simplify the formula for the Ricci tensor. They were then
introduced into Riemannian geometry by Sabitov and Sefel \cite{SS} and later were studied
by DeTurck and Kazdan \cite{TK} and  Hebey and Herzlich \cite{HeHe}. The essential
motivation for introducing a harmonic coordinate
system is that the Ricci tensor is simplified, when written in this coordinate system.

\begin{definition}
A coordinate chart $(x^1, ..., x^n)$ on a Riemannian manifold $(M,g=(g_{ij}))$
is called harmonic if $\Delta x^k = 0$ for all $k = \overline{1,n}$. Since
$\Delta x^k= - g^{ij}(x)\Gamma^k_{ij}(x)$, where the $\Gamma'$s are the Christoffel symbols of the connection
associated to $g$, we get that a coordinate chart $(x^1, ..., x^n)$ is harmonic if
and only if $g^{ij}(x) \Gamma^k_{ij}(x) = 0$, for any $i,j,k = \overline{1,n}$.
\end{definition}

In a harmonic coordinate system, the Ricci tensor has a simplified formula
(quasilinear elliptic operator)
$$R_{ij}=g^{kl}\left(-\frac{1}{2}\,\,\partial_k\partial_l g_{ij}+ g_{mn}\Gamma^m_{ik}\Gamma^n_{lj}\right).$$

Given $R_{ij}=0$, we look to analyse and to solve the PDEs system
$$g^{kl}\left(-\frac{1}{2}\,\,\partial_k\partial_l g_{ij}+ g_{mn}\Gamma^m_{ik}\Gamma^n_{lj}\right)=0,$$
 on $S^2_+T^*M$, in harmonic coordinates. The strong form associated to this PDEs system is
$$-\frac{1}{2}\,\,\partial_k\partial_l g_{ij}+ g_{mn}\Gamma^m_{ik}\Gamma^n_{lj}=0.$$
Any Cauchy problem attached to this strong second order PDEs system has unique solution.
Some examples: constant solution $g_{ij}(x) = c_{ij}$;
nonconstant solutions, $g_{ij}(x) = c_{ij}f(x)$, with convenient function $f(x)$;
soliton solutions $g_{ij}(x) = \phi_{ij}(a_kx^k)$; etc.

\subsection{Scalar curvature - flat manifold}
Let $\nabla$ be an equiaffine connection of components $\Gamma^i_{jk}$ and $g$ be
a Riemannian metric of components $g_{ij}$.
\begin{definition}
An equiaffine Riemannian manifold $(M,\nabla,g)$ is called scalar curvature-flat if
around each point of $M$ there exists a chart
such that $\mathcal{R}=g^{ij}R_{ij} = 0$.
\end{definition}

On an equiaffine Riemannian manifold $(M,\nabla,g)$, we can look at the condition $\mathcal{R}=0$
as one PDE with $\frac{n^2(n+1)}{2}$ unknown functions $\Gamma^i_{jk}$. On Riemannian manifold
$(M,g)$, the condition $R=0$ is one PDE with $\frac{n(n+1)}{2}$ unknowns functions $g_{ij}$, on $S^2_+T^*M$.

In dimension two, Riemannian curvature tensor, Ricci curvature and
scalar curvature are all the same. By rescaling a metric at each point,
namely a conformal change, we can make its curvature constant. This is
the uniformization theorem which identifies conformal structures
(or equivalently complex structures) and Einstein metrics in this dimension.
In dimension three, every Einstein metric has constant sectional curvature.
The search for them is an important avenue into understanding
the Poincare conjecture and Thurston geometrization conjecture for three
dimensional topology and we have the recent breakthrough by Perelman
toward solving them.

\section{Least squares Lagrangian densities}
The examples presented in this Section include typical functionals
that appear in the theory of geometric and physical fields.
Similar problems are found in \cite{An} and \cite{BU}, where
the approach that has been used is the so-called variational approach.

Generally, the Euler–Lagrange equation provides the
equation of motion for the dynamical field specified in the
Lagrangian.

{\bf Dual variational principle}: Let $(M,g)$ be a Riemannian manifold. Usually, the local components
of the metric $g$ are denoted by $g_{ij}$ and the components of the inverse $g^{-1}$ are denoted by $g^{ij}$.
Due to the musical isomorphism between the tangent bundle $TM$ and the cotangent bundle $T^*M$
of a Riemannian manifold induced by its metric tensor $g$, the arbitrary variations of $g_{ij}$ are equivalent
to the arbitrary variations of $g^{ij}$, and any Lagrangian with respect to $g_{ij}$
can be regarded as a Lagrangian in relation to $g^{ij}$, but the orders are different.

When calculating the variation with respect to $g^{ij}$, certain terms may appear
whose integral over any domain $\Omega$ can
be reduced via integration by parts to an integral over the boundary $\partial \Omega$, which vanish
(variations vanish on boundary).
Modulo this statement, the Euler-Lagrange PDEs are reduced to $\frac{\partial \mathcal{L}}{\partial g^{ij}}=0$
(the formal partial derivatives equal to zero).

\subsection{Riemannian volume form}
Suppose $(M,g=(g_{ij}))$ is a smooth oriented Riemannian manifold.
Then there is a consistent way to choose the sign of the
square root $\sqrt{\det (g_{ij})}$ and define a volume form
$d\mu= \sqrt{\det (g_{ij})}\,\,dx^1\wedge\cdots\wedge dx^n.$
We call it the Riemannian volume form of $(M,g)$. Having a volume form
allows us to integrate functions on $M$. In particular $vol(M) = \int_M d\mu$ is an
important invariant of $(M,g)$. It also allows us to define an inner product
$\langle \phi, \psi \rangle= \int_M \langle\phi(x),\psi(x)\rangle_g\,\,d\mu,$
on the space of differential forms and other tensors or objects on $M$, using the
metric $g$ and its inverse $g^{-1}$. This inner product induces the square of the norm
$\|\phi\|^2=\int_M |\phi(x)|^2_g\,\,d\mu.$

\subsection{Positive definite differential operator}
For an $n\times n$ matrix of numbers or functions, positive definiteness is equivalent to the fact that
its leading principal minors are all positive ($n$ inequalities).

For an $n\times n$ matrix of partial derivatives operators, positive definiteness is equivalent to the fact that
its leading principal minors are all positive ($n$ partial differential inequalities).
See also \cite{JS}.

\subsection{$\Gamma$-flatness deviation}

Consider the triple $(M,\nabla,g)$, where $\nabla$ is a symmetric affine connection of components
$\Gamma^i_{jk}$ and  $(M,g=(g_{ij}))$ is a smooth oriented Riemannian manifold.
The square of the norm $L=\|\nabla\|^2= g_{ip}g^{jq}g^{kr}\Gamma^i_{jk}\Gamma^p_{qr}$
(Lagrangian density of zero order with respect to $\Gamma^i_{jk}$) determines the functional ($\nabla$-flatness deviation)
$ I(\nabla)=\int_M \|\nabla\|^2 d\mu,$
whose associated Lagrangian is $\mathcal{L}= \|\nabla\|^2\sqrt{\det(g_{ij})}$.

\begin{theorem}
The solutions of Euler-Lagrange PDEs of the Lagrangian $\mathcal{L}$
are only global minimum points $\Gamma^p_{qr}= 0$.
\end{theorem}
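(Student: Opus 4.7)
The plan is to exploit the fact that the density $L=\|\nabla\|^{2}$ depends pointwise on the components $\Gamma^{i}_{jk}$ only (zero order in $\Gamma$), so the associated Euler--Lagrange system collapses to the algebraic equation $\partial\mathcal{L}/\partial\Gamma^{a}_{bc}=0$, with no derivative terms to integrate by parts. Since $\sqrt{\det(g_{ij})}$ does not involve $\Gamma$, this is equivalent to $\partial L/\partial\Gamma^{a}_{bc}=0$.

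First I would differentiate $L=g_{ip}g^{jq}g^{kr}\Gamma^{i}_{jk}\Gamma^{p}_{qr}$ with respect to $\Gamma^{a}_{bc}$. Using the product rule on the two factors of $\Gamma$ and renaming dummy indices, the two contributions coincide (this is exactly the symmetry that makes $L$ quadratic and positive), giving
$$\frac{\partial L}{\partial \Gamma^{a}_{bc}}=2\,g_{ap}\,g^{bq}\,g^{cr}\,\Gamma^{p}_{qr}.$$
Setting this to zero yields the linear algebraic system $g_{ap}g^{bq}g^{cr}\Gamma^{p}_{qr}=0$.

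Next I would invoke the positive definiteness of $g$: the matrices $(g_{ap})$, $(g^{bq})$, $(g^{cr})$ are all invertible at every point of $M$, so one can successively apply $g^{as}$, $g_{bt}$, $g_{cu}$ to conclude $\Gamma^{s}_{tu}(x)=0$ for every $x\in M$ and every triple of indices. Thus the only critical points of $\mathcal{L}$ are the vanishing connections.

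Finally, to identify these critical points as global minima I would note that $L=\|\nabla\|^{2}\geq 0$ pointwise with equality if and only if $\Gamma^{p}_{qr}=0$, so the functional $I(\nabla)=\int_{M}L\,d\mu\geq 0$ attains its infimum $0$ precisely at the zero connection. The only step requiring any care is the symmetrization when differentiating with respect to the independent variables $\Gamma^{a}_{bc}$ (remembering that $\Gamma^{a}_{bc}=\Gamma^{a}_{cb}$), but this only affects the overall scalar factor and does not alter the conclusion that the partial derivative forces $\Gamma\equiv 0$.
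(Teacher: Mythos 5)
Your proposal is correct and follows essentially the same route as the paper: treat $\mathcal{L}$ as zero order in $\Gamma$, compute $\frac{\partial \mathcal{L}}{\partial \Gamma^s_{mn}}=2g_{sp}g^{mq}g^{nr}\Gamma^p_{qr}\sqrt{\det(g_{ij})}$, and use the invertibility of $g$ to force $\Gamma^p_{qr}=0$. You supply slightly more detail than the paper (the explicit inversion step, the positivity argument identifying the critical point as a global minimum, and the remark on symmetrization in $b,c$), but the argument is the same.
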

\begin{proof}
We use the partial derivatives
$\frac{\partial \Gamma^i_{jk}}{\partial \Gamma^s_{mn}}= \delta^i_s \delta^m_j \delta^n_k.$
The Euler-Lagrange equations
$\frac{\partial \mathcal{L}}{\partial \Gamma^s_{mn}}=2g_{sp}g^{mq}g^{nr}\Gamma^p_{qr}\sqrt{\det(g_{ij})}=0$
are equivalent to $\Gamma^p_{qr}= 0$.
\end{proof}

Let $(M,g=(g_{ij}))$ be a Riemannian manifold and $\Gamma^i_{jk}$ the induced Christoffel symbols
of the Riemannian connection $\nabla$.
The square of the norm $L=\|\nabla\|^2= g_{ip}g^{jq}g^{kr}\Gamma^i_{jk}\Gamma^p_{qr}$
is a Lagrangian density of first order with respect to $g_{ij}$ and of order zero with respect to $g^{ij}$.
That is why, we have two kinds of functionals describing $\nabla$-flatness deviation,
either $ I(g)=\int_M \|\nabla\|^2 d\mu$ or $ I(g^{-1})=\int_M \|\nabla\|^2 d\mu.$
Though the second is more simple, from variational point of view, let us begin the study with the first functional
whose associated Lagrangian $\mathcal{L}= \|\nabla\|^2\sqrt{\det(g_{ij})}$ is of first order in $g_{ij}$.
\begin{theorem}
The extremals $g$ of $I(g)$, i.e., the solutions of PDEs
$$\left[g^{jq}g^{kr}\Gamma^m_{jk}\Gamma^n_{qr}- g_{ip}\Gamma^i_{jk}\Gamma^p_{qr}(g^{mj}g^{nq}g^{kr}+g^{jq}g^{mk}g^{nr})\right]\sqrt{\det(g_{ij})}$$
$$ +\frac{1}{2}g_{ip}g^{jq}g^{kr}\Gamma^i_{jk}\Gamma^p_{qr}g^{mn}\sqrt{\det(g_{ij})}$$
$$- D_{x^l}\left[g^{jq}g^{kr}(\delta^m_u\delta^n_j \delta^l_k + \delta^m_u \delta^n_k\delta^l_j-\delta^l_u\delta^m_j\delta^n_k)\Gamma^u_{qr}\sqrt{\det(g_{ij})}\right]=0$$
are minimum points of $I(g)$.
\end{theorem}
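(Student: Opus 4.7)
The proof splits into two parts: deriving the Euler--Lagrange (EL) system and identifying which of its solutions are in fact minima.

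For the EL derivation, the plan is to apply $\frac{\partial \mathcal{L}}{\partial g_{mn}} - D_{x^l}\frac{\partial \mathcal{L}}{\partial (\partial_l g_{mn})} = 0$ to the Lagrangian $\mathcal{L} = g_{ip}g^{jq}g^{kr}\Gamma^i_{jk}\Gamma^p_{qr}\sqrt{\det(g_{ab})}$. The metric enters $\mathcal{L}$ algebraically in four places (the covariant factor $g_{ip}$, the two inverses $g^{jq}$, $g^{kr}$, and the density $\sqrt{\det(g_{ab})}$) and enters via $\partial_l g_{mn}$ only through the two Christoffel factors. I would use the standard identities
\[
\frac{\partial g^{ab}}{\partial g_{mn}} = -\tfrac{1}{2}(g^{am}g^{bn}+g^{an}g^{bm}), \qquad \frac{\partial \sqrt{\det g}}{\partial g_{mn}} = \tfrac{1}{2}g^{mn}\sqrt{\det g},
\]
together with the partial derivative of the Christoffel symbols computed directly from the formula already displayed in Subsection 2.1,
\[
\frac{\partial \Gamma^u_{qr}}{\partial (\partial_l g_{mn})} = \frac{1}{2}g^{uv}\bigl(\delta^m_v\delta^n_q\delta^l_r+\delta^m_v\delta^n_r\delta^l_q-\delta^l_v\delta^m_q\delta^n_r\bigr).
\]
Then I would carry out the chain rule term by term: the variation of $g_{ip}$ yields $g^{jq}g^{kr}\Gamma^m_{jk}\Gamma^n_{qr}$; the variations of the two inverses combine, using the $(j,k)\leftrightarrow(q,r)$ symmetry of $\mathcal{L}$, into $-g_{ip}\Gamma^i_{jk}\Gamma^p_{qr}(g^{mj}g^{nq}g^{kr}+g^{jq}g^{mk}g^{nr})$; the Jacobi term produces $\tfrac{1}{2}\|\nabla\|^2 g^{mn}$; and the derivative-dependence through both $\Gamma$ factors (which contribute equally, giving the crucial factor of two) assembles, after relabeling, into the divergence term displayed in the statement.

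For the minimum claim, the essential observation is that the integrand is pointwise non-negative: $L = g_{ip}g^{jq}g^{kr}\Gamma^i_{jk}\Gamma^p_{qr}$ is the squared norm of $\Gamma$ in the positive-definite fiberwise inner product induced by $g$, so $I(g)\ge 0$ for every Riemannian metric, with equality if and only if $\Gamma^i_{jk}\equiv 0$. Any such metric satisfies the displayed EL system trivially, since every algebraic term contains a product of two Christoffel factors and the divergence term contains a $\Gamma$ inside $D_{x^l}$. Hence metrics with $\Gamma\equiv 0$ are extremals realizing the infimum $0$ of $I$, and therefore they are global minimum points of $I(g)$.

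The main technical obstacle is the index bookkeeping in the derivation: one must symmetrize $\frac{\partial g^{ab}}{\partial g_{mn}}$ with respect to the symmetric pair $(m,n)$, track the factor of two from the two identical $\Gamma$ factors in $\mathcal{L}$, and regroup the contribution through $\partial_l g_{mn}$ so that the compact bracket $(\delta^m_u\delta^n_j\delta^l_k+\delta^m_u\delta^n_k\delta^l_j-\delta^l_u\delta^m_j\delta^n_k)$ of the paper emerges unambiguously. I would use two sanity checks along the way: symmetry of the final EL expression in $(m,n)$, and contraction of the three non-divergence terms with $g_{mn}$, which collapses to $(\tfrac{n}{2}-1)\|\nabla\|^2\sqrt{\det g}$ and vanishes on the flat extremals, as required.
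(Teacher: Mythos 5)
Your derivation of the Euler--Lagrange system is essentially the paper's own computation: the same chain-rule decomposition into the algebraic occurrences of the metric (the factor $g_{ip}$, the two inverses, the density) plus the derivative dependence through the two Christoffel factors, the same auxiliary identities, and the same reassembly into the bracket $(\delta^m_u\delta^n_j \delta^l_k + \delta^m_u \delta^n_k\delta^l_j-\delta^l_u\delta^m_j\delta^n_k)$. That part is sound; the paper uses the unsymmetrized $\partial g^{jk}/\partial g_{mn}=-g^{mj}g^{nk}$ rather than your symmetrized version, but after contraction with the symmetric factors the result is the same, and your $g_{mn}$-trace check is correct.

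The gap is in the second half. The theorem asserts that \emph{every} solution of the displayed Euler--Lagrange PDEs is a minimum point of $I(g)$. Your argument only establishes this for the metrics with $\Gamma^i_{jk}\equiv 0$ (equivalently $g_{ij}=c_{ij}$), which realize the infimum $I=0$ of a manifestly non-negative functional. But the set of extremals is strictly larger: the paper remarks immediately after the theorem that the extremals are ``Euler--Lagrange prolongations'' of the constant metrics, i.e., the EL system is expected to admit solutions with $\Gamma\not\equiv 0$ and $I(g)>0$, and the content of the theorem is precisely that these too are minima. Your proposal says nothing about them. The paper handles this by computing the Hessian $H_{(lmn)(abc)}=\partial^2 L/\partial(\partial_{x^l}g_{mn})\partial(\partial_{x^a}g_{bc})$, asserting it is positive definite, and invoking the Legendre--Jacobi criterion to conclude that all extremals are minimum points. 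Whatever one thinks of the rigor of that step, some second-order or convexity argument valid at a non-flat extremal is the essential ingredient your proposal is missing; without it, only the trivial part of the statement is proved.
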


\begin{proof} The extremals of the Lagrangian $\mathcal{L}$ are solutions of Euler-Lagrange PDEs
$$\frac{\partial \mathcal{L}}{\partial g_{mn}}- D_{x^l} \,\,\frac{\partial \mathcal{L}}{\partial(\partial_{x^l}g_{mn})}=0.$$
This critical points are global (when $\mathcal{L}$=0) or local (when $\mathcal{L}\neq 0$).

Suppose $\mathcal{L}\neq 0$. Based on obvious formulas
$$\frac{\partial g_{jk}}{\partial g_{mn}}= \delta^m_j \delta^n_k,
\,\,\frac{\partial g^{jk}}{\partial g_{mn}}= - g^{mj}g^{nk},$$
$$\frac{\partial}{\partial g_{mn}}\det(g_{ij})= \det(g_{ij})g^{mn},
\,\,\,\,\frac{\partial{(\partial_{x^t} g_{rs})}}{\partial{(\partial_{x^l}g_{mn})}}=\delta^l_t \delta^m_r \delta^n_s,$$
we obtain
$$\frac{\partial \mathcal{L}}{\partial g_{mn}}= \left[g^{jq}g^{kr}\Gamma^m_{jk}\Gamma^n_{qr}- g_{ip}\Gamma^i_{jk}\Gamma^p_{qr}(g^{mj}g^{nq}g^{kr}+g^{jq}g^{mk}g^{nr})\right]\sqrt{\det(g_{ij})}$$
$$ +\frac{1}{2}g_{ip}g^{jq}g^{kr}\Gamma^i_{jk}\Gamma^p_{qr}\sqrt{\det(g_{ij})}\,\,g^{mn},$$
$$\frac{\partial \mathcal{L}}{\partial{(\partial_{x^l}g_{mn})}}
= 2g_{ip}g^{jq}g^{kr}\Gamma^p_{qr}\sqrt{\det(g_{ij})}\,\,\frac{\partial \Gamma^i_{jk}}{\partial(\partial_{x^l}g_{mn})}$$
$$= g_{ip}g^{jq}g^{kr}\Gamma^p_{qr}g^{iu}(\delta^r_u \delta^s_j \delta^t_k + \delta^r_u \delta^s_k \delta^t_j - \delta^t_u \delta^r_j \delta^s_k)  \sqrt{\det(g_{ij})}\,\, \frac{\partial{(\partial_{x^t} g_{rs})}}{\partial{(\partial_{x^l}g_{mn})}}$$
$$
=g^{jq}g^{kr}\Gamma^u_{qr} (\delta^r_u \delta^s_j \delta^t_k + \delta^r_u \delta^s_k \delta^t_j - \delta^t_u \delta^r_j \delta^s_k)\delta^l_t \delta^m_r \delta^n_s \sqrt{\det(g_{ij})}
$$
$$= g^{jq}g^{kr}(\delta^m_u\delta^n_j \delta^l_k + \delta^m_u \delta^n_k\delta^l_j-\delta^l_u\delta^m_j\delta^n_k)\Gamma^u_{qr}\sqrt{\det(g_{ij})}.$$
The explicit Euler-Lagrange PDEs are those in Theorem.

Now let us compute the Hessian matrix of components
$$H_{(lmn)(abc)}= \frac{\partial^2 L}{\partial{(\partial_{x^l}g_{mn})}\partial{(\partial_{x^a}g_{bc})}}=g^{jq}g^{kr}(\delta^m_u\delta^n_j \delta^l_k + \delta^m_u \delta^n_k\delta^l_j-\delta^l_u\delta^m_j\delta^n_k)
\frac{\partial \Gamma^u_{qr}}{\partial(\partial_{x^a}g_{bc})}$$
$$=\frac{1}{2}g^{jq}g^{kr}(\delta^m_u\delta^n_j \delta^l_k + \delta^m_u \delta^n_k\delta^l_j-\delta^l_u\delta^m_j\delta^n_k)
g^{uv}(\delta^r_v \delta^s_q \delta^t_r + \delta^r_v \delta^s_r \delta^t_q - \delta^t_v \delta^r_q \delta^s_r)\frac{\partial{(\partial_{x^t}g_{rs})}}{\partial{(\partial_{x^a}g_{bc})}}$$
$$=\frac{1}{2}g^{jq}g^{kr}g^{uv}(\delta^m_u\delta^n_j \delta^l_k + \delta^m_u \delta^n_k\delta^l_j-\delta^l_u\delta^m_j\delta^n_k)
(\delta^b_v \delta^c_q \delta^a_r + \delta^b_v \delta^c_r \delta^a_q - \delta^a_v \delta^b_q \delta^c_r).$$
This matrix is invariant if one interchange $l$ with $a$ and the (un-ordered) pair $m, n$ with the (un-ordered)
pair $b, c$, what must happen with a mixed derivative. Since the matrix $H$ is positive definite,
all extremals are minimum points (Legendre-Jacobi criterium).
\end{proof}
The extremals of $I(g)$ are Euler-Lagrange prolongations of the solutions $g_{ij}(x)= c_{ij}$ (constants).

For comparison we use
${\cal L}(g^{-1})=g_{ip}g^{jq}g^{kr}\Gamma ^i_{jk}\Gamma ^p_{qr}\sqrt{det (g_{ij})},$
the general form of Euler-Lagrange PDEs $\frac{\partial {\cal L}}{\partial g^{mn}}=0$
and we formulate the following
\begin{theorem}
The extremals $g=(g_{ij})$ of ${\cal L}(g^{-1})$ are solutions of PDEs system
$$
g^{kr}\left(-2g_{mp}g_{ni}g^{jq}+g_{mi}g_{np}g^{jq}- 2g_{ip}\delta^j_m\delta^q_n +
\frac{1}{2}g_{ip}g^{jq}g_{mn}\right)\Gamma ^i_{jk}\Gamma ^p_{qr}=0.$$
\end{theorem}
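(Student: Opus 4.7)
The plan is to apply the dual variational principle stated earlier in the paper, which, under the hypothesis that admissible variations vanish on the boundary of $\Omega$, reduces the Euler--Lagrange equation for $\mathcal{L}(g^{-1})$ to the formal partial-derivative condition $\partial\mathcal{L}/\partial g^{mn}=0$. The rest is then a careful chain-rule calculation, in which the first derivatives $\partial_t g_{rs}$ appearing inside $\Gamma^i_{jk}$ are treated as data, independent of $g^{mn}$ at each point.

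First I would identify the four sources of $g^{mn}$-dependence in
$$\mathcal{L}=g_{ip}g^{jq}g^{kr}\Gamma^i_{jk}\Gamma^p_{qr}\sqrt{\det(g_{ij})}:$$
the explicit inverse-metric factors $g^{jq}$ and $g^{kr}$; the covariant factor $g_{ip}$, viewed as a function of the symmetric matrix $(g^{mn})$ through inversion; the volume density $\sqrt{\det(g_{ij})}$; and the Christoffel symbols $\Gamma^i_{jk}$ through the leading $g^{il}$ factor in their definition.

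Next I would apply the standard identities
$$\frac{\partial g_{ip}}{\partial g^{mn}}=-g_{im}g_{pn},\qquad \frac{\partial g^{jq}}{\partial g^{mn}}=\delta^j_m\delta^q_n,\qquad \frac{\partial\sqrt{\det(g_{ij})}}{\partial g^{mn}}=-\tfrac12\,g_{mn}\sqrt{\det(g_{ij})},$$
each symmetrized in the appropriate index pairs; the first follows by differentiating $g_{ij}g^{jk}=\delta^k_i$, and the last from Jacobi's formula together with $\det g_{ij}=(\det g^{ij})^{-1}$. I would also use
$$\frac{\partial \Gamma^i_{jk}}{\partial g^{mn}}=\tfrac12\,\delta^i_m\bigl(\partial_j g_{nk}+\partial_k g_{nj}-\partial_n g_{jk}\bigr),$$
from the explicit $g^{il}$ factor in $\Gamma^i_{jk}$. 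Since the bracket equals $2g_{ni}\Gamma^i_{jk}$ after relabelling the dummy index, every term in the chain-rule expansion can be rewritten without any bare metric derivatives, as pure contractions of two Christoffel symbols with tensor products of $g_{ab}$ and $g^{cd}$.

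Finally I would collect the resulting contributions into the four tensorial patterns appearing in the stated PDE: the ``cross'' pattern $g_{mp}g_{ni}g^{jq}g^{kr}\Gamma^i_{jk}\Gamma^p_{qr}$, coming chiefly from the two $\partial\Gamma/\partial g^{mn}$ contributions; the ``straight'' pattern $g_{mi}g_{np}g^{jq}g^{kr}\Gamma^i_{jk}\Gamma^p_{qr}$, produced by $\partial g_{ip}/\partial g^{mn}$ combined with the companion part of $\partial\Gamma/\partial g^{mn}$; the ``collapsed'' pattern $g_{ip}g^{kr}\Gamma^i_{mk}\Gamma^p_{nr}$, produced by the explicit derivatives of $g^{jq}$ and $g^{kr}$; and the trace term $g_{mn}\,g_{ip}g^{jq}g^{kr}\Gamma^i_{jk}\Gamma^p_{qr}$ from $\partial\sqrt{\det g}/\partial g^{mn}$. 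Factoring the common $g^{kr}$ and $\sqrt{\det(g_{ij})}$ yields the displayed system.

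The main obstacle is the bookkeeping of signs and symmetrizations. The Lagrangian is invariant under the simultaneous swap $(i,j,k)\leftrightarrow(p,q,r)$ of the two Christoffel factors, and at each chain-rule step one must symmetrize in the free pair $(m,n)$ and in the relevant dummy pairs so that the factors of $\tfrac12$ coming from symmetrization combine correctly with the $\tfrac12$ already present in the Christoffel formula. Getting the precise integer coefficients $-2,+1,-2,+\tfrac12$ to emerge, with no extra unaccounted terms, is the delicate step; this is where an error in index manipulation would most likely occur.
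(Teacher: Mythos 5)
Your plan is correct and coincides with the method the paper itself relies on: the theorem is stated there without a printed proof, but the surrounding text shows it comes from the formal condition $\partial\mathcal{L}/\partial g^{mn}=0$ using exactly the identities you list, with $\partial\Gamma^i_{jk}/\partial g^{mn}=\delta^i_m g_{nl}\Gamma^l_{jk}$ supplying the $g_{mp}g_{ni}$ term. Carrying out your four contributions yields the coefficients $+2,\,-1,\,+2,\,-\tfrac{1}{2}$, i.e.\ exactly $(-1)$ times the displayed bracket, which defines the same PDE system since the right-hand side is zero.
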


For calculus of the matrix
$H_{(ab);(mn)}=\frac{\partial^2 \mathcal{L}}{\partial g^{ab}\partial g^{mn}}$,
we need $\frac{\partial g^{kr}}{\partial g^{ab}}= \delta^k_a\delta^r_b$ and
$\frac{\partial g_{mj}}{\partial g^{ab}}= -g_{ma}g_{bj}$.
We find
$$H_{(ab);(mn)}= 2g_{ip}\Gamma ^i_{ma}\Gamma ^p_{nb}$$
$$+g^{kr}
\Big[2g_{ap}g_{bi}\Gamma^i_{mk}\Gamma ^p_{nr}+
(4g_{mp}g_{ni}-g_{ip}g_{mn}-2g_{mi}g_{np})\Gamma ^i_{ak}\Gamma ^p_{br}
$$
$$+\frac{1}{2}
g^{jq}(g_{ip}g_{ma}g_{nb}-g_{ap}g_{mn}g_{ib})\Gamma ^i_{jk}\Gamma ^p_{qr}\Big].$$
This matrix is not (neither positive nor negative) definite since it vanishes in the center of normal coordinates.
This is why this matrix is of no help in determining that extremals could be extremes.

\subsection{Riemann-flatness deviation}

Let $\nabla$ be a symmetric connection of components $\Gamma^i_{jk}$ and $g$
be a Riemannian metric of components $g_{ij}$. On the smooth oriented manifold $(M,\nabla,g)$, we introduce
the square of the norm $L=\|Riem^\nabla\|^2$ $= g_{ip}g^{jq}g^{kr}g^{ls}R^i_{\,\,jkl}R^p_{\,\,qrs}$,
$R^i_{\,\,jkl}= 2P^{ps}_{kl} \left(\frac{\partial}{\partial x^p}\Gamma^i_{js}+ \Gamma^i_{pn} \Gamma^n_{js}\right)$
which is a Lagrangian density of first order in $\Gamma^i_{jk}$.
It determines a functional (Riemann - flatness deviation) similar to the Yang-Mills functional, namely
$I(\nabla)=\int_M \|Riem^\nabla\|^2d\mu.$
The extremals of $\mathcal{L}(\nabla , \partial \nabla )= \|Riem^\nabla\|^2 \sqrt{\det(g_{ij})}$ are solutions of Euler-Lagrange PDEs
$\frac{\partial \mathcal{L} }{\partial \Gamma^u_{vw}}- D_{x^t}\frac{\partial \mathcal{L} }{\partial ({\partial_{x^t}} \Gamma^u_{vw})}=0.$
\begin{theorem}
The explicit form of Euler-Lagrange PDEs attached to the Lagrangian
${\mathcal L}(\nabla , \partial \nabla )$ are
$$(\delta _u^i\delta ^v_{[k}\delta ^b_{l]}
\Gamma ^{w }_{bj}+
\delta ^w_j\delta ^a_{[k}\delta ^v_{l]}\Gamma ^{i }_{au}
)
R^p_{qrs}g^{jq}g^{kr}g^{ls}g_{ip}\sqrt{det (g_{ij})}
$$
$$-D_{x^t}\left [
\delta ^{t}_{[k}\delta ^v_{l]}
 R^p_{qrs}
g^{wq}g^{kr}g^{ls}g_{up}
\sqrt{det (g_{ij})}\right]=0.$$
\end{theorem}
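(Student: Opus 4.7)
The plan is to apply the standard Euler-Lagrange formula
$$\frac{\partial \mathcal{L}}{\partial \Gamma^u_{vw}} - D_{x^t}\frac{\partial \mathcal{L}}{\partial(\partial_{x^t}\Gamma^u_{vw})} = 0,$$
treating the symmetric connection components and their first partials as independent variables. Since $\mathcal{L}$ depends on $\Gamma$ only through the curvature tensor, the chain rule reduces the computation to finding $\partial R^i_{\,\,jkl}/\partial \Gamma^u_{vw}$ and $\partial R^i_{\,\,jkl}/\partial(\partial_{x^t}\Gamma^u_{vw})$, then pairing with $g_{ip}g^{jq}g^{kr}g^{ls}R^p_{\,\,qrs}\sqrt{\det(g_{ij})}$. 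The symmetry of the contraction $g_{ip}g^{jq}g^{kr}g^{ls}$ under the simultaneous swap $(i,j,k,l)\leftrightarrow(p,q,r,s)$ yields an overall factor $2$ from the two copies of $R$; this factor is absorbed by the $\frac{1}{2}$ that appears when enforcing $\Gamma^u_{vw} = \Gamma^u_{wv}$ via
$$\frac{\partial \Gamma^i_{jk}}{\partial \Gamma^u_{vw}} = \frac{1}{2}\delta^i_u(\delta^v_j\delta^w_k + \delta^w_j\delta^v_k),$$
which matches the absence of an explicit factor $2$ in the stated equation.

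First I would split $R^i_{\,\,jkl} = 2P^{ps}_{kl}(\partial_p\Gamma^i_{js} + \Gamma^i_{pn}\Gamma^n_{js})$ into an algebraic quadratic part (the $\Gamma\Gamma$ piece) and a linear derivative part (the $\partial\Gamma$ piece). Only the quadratic part contributes to $\partial\mathcal{L}/\partial\Gamma^u_{vw}$. Applying Leibniz to $\Gamma^i_{ks}\Gamma^s_{jl} - \Gamma^i_{ls}\Gamma^s_{jk}$ produces two classes of terms: one where the derivative hits the ``upper-index'' factor $\Gamma^i_{ks}$, and one where it hits the ``lower-index'' factor $\Gamma^s_{jl}$. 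After symmetrizing in $(v,w)$ and repackaging in the antisymmetrization brackets $\delta^v_{[k}\delta^b_{l]}$ and $\delta^a_{[k}\delta^v_{l]}$ already used in the paper, these contract against $g_{ip}g^{jq}g^{kr}g^{ls}R^p_{\,\,qrs}\sqrt{\det(g_{ij})}$ to give respectively the $\delta^i_u\delta^v_{[k}\delta^b_{l]}\Gamma^w_{bj}$ and $\delta^w_j\delta^a_{[k}\delta^v_{l]}\Gamma^i_{au}$ blocks stated in the theorem.

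For the momentum derivative only the linear part contributes. Using $\partial(\partial_p\Gamma^i_{js})/\partial(\partial_{x^t}\Gamma^u_{vw}) = \delta^t_p\delta^i_u\delta^v_j\delta^w_s$ together with the projector $P^{ps}_{kl}$ gives
$$\frac{\partial R^i_{\,\,jkl}}{\partial(\partial_{x^t}\Gamma^u_{vw})} = \delta^i_u\delta^v_j\,\delta^t_{[k}\delta^w_{l]}.$$
Multiplying by $2g_{ip}g^{jq}g^{kr}g^{ls}R^p_{\,\,qrs}\sqrt{\det(g_{ij})}$, symmetrizing in $(v,w)$, and invoking the antisymmetry of $R^p_{\,\,qrs}$ in $(r,s)$ to rename dummy indices, one arrives at exactly the bracket inside $D_{x^t}[\,\cdot\,]$ in the statement. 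Applying $-D_{x^t}$ and adding the algebraic piece closes the Euler-Lagrange equation.

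The only genuine difficulty is the index bookkeeping: one must juggle the antisymmetry of $R^p_{\,\,qrs}$ in $(r,s)$ against the symmetrization of $\Gamma^u_{vw}$ in $(v,w)$ so that every Kronecker product fits inside the compact antisymmetrization notation $\delta^a_{[k}\delta^b_{l]}$ favoured in the statement. Beyond that, no new analytic ideas are required beyond the Leibniz rule, the chain rule and the standard Euler-Lagrange derivation already exercised in the earlier theorems of this section.
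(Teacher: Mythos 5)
The paper states this theorem without any proof, so there is nothing to compare line by line; the relevant benchmark is the proof the paper gives for the analogous scalar-curvature theorem, which uses exactly the recipe you propose: the Euler--Lagrange formula $\frac{\partial \mathcal{L}}{\partial \Gamma^u_{vw}} - D_{x^t}\frac{\partial \mathcal{L}}{\partial(\partial_{x^t}\Gamma^u_{vw})}=0$ together with the elementary derivatives $\frac{\partial \Gamma^i_{jk}}{\partial \Gamma^u_{vw}}$ and $\frac{\partial(\partial_p\Gamma^i_{js})}{\partial(\partial_{x^t}\Gamma^u_{vw})}$ and the chain rule through $R^i_{\,\,jkl}$. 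Your outline is the intended computation, your key formula $\frac{\partial R^i_{\,\,jkl}}{\partial(\partial_{x^t}\Gamma^u_{vw})} = \delta^i_u\delta^v_j\,\delta^t_{[k}\delta^w_{l]}$ is correct (in the unsymmetrized convention the paper itself uses elsewhere, via $\frac{\partial \Gamma^i_{jk}}{\partial \Gamma^l_{mn}}=\delta^i_l\delta^m_j\delta^n_k$), and the Leibniz split of the quadratic part does produce the two $\Gamma$-blocks of the stated equation.

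Two bookkeeping points deserve correction. First, carrying out your own computation literally yields $\delta^t_{[k}\delta^{w}_{l]}R^p_{\,\,qrs}g^{vq}g^{kr}g^{ls}g_{up}$ inside the total derivative, i.e.\ the roles of $v$ and $w$ are exchanged relative to the statement; this exchange is \emph{not} achieved by the antisymmetry of $R^p_{\,\,qrs}$ in $(r,s)$ combined with dummy renaming, as you claim, because $v$ and $w$ are free indices. What actually reconciles the two expressions is that only the part symmetric in $(v,w)$ of the Euler--Lagrange expression is meaningful (since $\Gamma^u_{vw}=\Gamma^u_{wv}$), and the theorem simply writes the other representative of each symmetrized pair; for the affine curvature, which lacks the pair symmetry of the Riemannian one, the two representatives are genuinely different tensors, so this should be said explicitly rather than attributed to a symmetry of $R$. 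Second, your accounting of the overall factor $2$ (from the two copies of $R$) against the $\tfrac12$ in the symmetrized $\partial\Gamma/\partial\Gamma$ is internally consistent but uses a different convention from the one the paper employs in its other proofs; since the equation is homogeneous this only affects an irrelevant overall constant. With these caveats the proposal is a sound derivation of the stated PDEs.
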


On the Riemannian manifold $(M,g=(g_{ij}))$, we introduce
the square of the norm $L=\|Riem^g\|^2= g^{ip}g^{jq}g^{kr}g^{ls}R_{ijkl}R_{pqrs}$
which is of second order with respect to $g_{ij}$ and of order zero with respect
to $g^{ij}$. In this way the Riemann - flatness deviation is either the functional
$I(g)=\int \|Riem^g\|^2d\mu$ or the functional $I(g^{-1})=\int \|Riem^g\|^2d\mu$.
For $I(g)$ the extremals are solutions of Euler-Lagrange PDEs
$$\frac{\partial \mathcal{L}}{\partial g_{mn}}- D_{x^l} \,\,\frac{\partial \mathcal{L}}{\partial(\partial_{x^l}g_{mn})}
+ D_{x^k}D_{x^l} \,\,\frac{\partial \mathcal{L}}{\partial(\partial_{x^k}\partial_{x^l}g_{mn})}=0,$$
while for $I(g^{-1})$ the Euler-Lagrange PDEs are reduced to
$\frac{\partial \mathcal{L}}{\partial g^{mn}}=0.$

\begin{theorem}
The extremals $g=(g_{ij})$ of the Lagrangian ${\cal L}(g^{-1})$
are solutions of the PDEs system
$$2\delta ^c_{[k}\delta ^d_{l]}
R_{pqrs}
g^{ap}g^{bq}g^{kr}g^{ls} g_{nv }g_{wm}\Gamma _{bc}^{v }\Gamma _{ad}^{w}
$$
$$
+2R_{ijkl}R_{pqrs}(\delta _{m}^i\delta _{n}^{p}g^{jq}+
\delta _{m}^j\delta _{n}^{q}g^{ip})g^{kr}g^{ls}
-\frac {1}{2}R_{ijkl}R_{pqrs}g^{ip}g^{jq}g^{kr}g^{ls}\,\,g_{mn}=0.
$$
\end{theorem}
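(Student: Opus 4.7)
The plan is to invoke the dual variational principle recalled in Section 3: since $\mathcal{L}(g^{-1})$ is written in terms of $g^{ij}$ algebraically and the remaining derivative terms $\partial g_{ij}$, $\partial^2 g_{ij}$ can be integrated by parts onto the boundary where variations vanish, the Euler--Lagrange system collapses to the purely algebraic condition $\partial\mathcal{L}/\partial g^{mn}=0$. This turns the problem into a single (if intricate) formal partial differentiation.

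I would then split the $g^{mn}$--dependence of $\mathcal{L}(g^{-1})=g^{ip}g^{jq}g^{kr}g^{ls}R_{ijkl}R_{pqrs}\sqrt{\det(g_{ij})}$ into three channels: (a) the four contravariant contraction factors $g^{ip}g^{jq}g^{kr}g^{ls}$; (b) the two Riemann--tensor factors $R_{ijkl}R_{pqrs}$, whose $g^{mn}$--content (under the dual principle) lies entirely in the Christoffel bilinear $g_{uv}(\Gamma^u_{jk}\Gamma^v_{il}-\Gamma^u_{jl}\Gamma^v_{ik})$; and (c) the volume density $\sqrt{\det(g_{ij})}$. The three identities I would use throughout are
$$\frac{\partial g^{ab}}{\partial g^{mn}}=\delta^{a}_{(m}\delta^{b}_{n)},\qquad \frac{\partial g_{ab}}{\partial g^{mn}}=-g_{a(m}g_{n)b},\qquad \frac{\partial\sqrt{\det g}}{\partial g^{mn}}=-\tfrac12\, g_{mn}\sqrt{\det g}.$$

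Piece (c) is immediate and produces the last summand $-\tfrac12 R_{ijkl}R_{pqrs}g^{ip}g^{jq}g^{kr}g^{ls}g_{mn}$. Piece (a) expands by Leibniz into four contributions; the pair--swap symmetry $R_{ijkl}=R_{klij}$ (with appropriate dummy renaming) identifies the $(i,p)$--variation with the $(k,r)$--variation, and likewise the $(j,q)$--variation with the $(l,s)$--variation, leaving the middle summand $2R_{ijkl}R_{pqrs}(\delta^i_m\delta^p_n g^{jq}+\delta^j_m\delta^q_n g^{ip})g^{kr}g^{ls}$. For piece (b) the cleanest bookkeeping is to rewrite $g_{uv}\Gamma^u_{jk}\Gamma^v_{il}=g^{ab}\Gamma_{a,jk}\Gamma_{b,il}$, where the first--kind Christoffels $\Gamma_{a,jk}$ are independent of $g^{\cdot\cdot}$ under the dual principle; this yields at once $\partial(g_{uv}\Gamma^u_{jk}\Gamma^v_{il})/\partial g^{mn}=\tfrac12(\Gamma_{m,jk}\Gamma_{n,il}+\Gamma_{n,jk}\Gamma_{m,il})$. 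Antisymmetrisation in $(k,l)$ inherited from the Riemann tensor produces the projector $\delta^c_{[k}\delta^d_{l]}$, and converting the first--kind Christoffels back to second--kind via $g_{mv}$, $g_{nw}$ yields the first summand $2\delta^c_{[k}\delta^d_{l]}R_{pqrs}g^{ap}g^{bq}g^{kr}g^{ls}g_{nv}g_{wm}\Gamma^v_{bc}\Gamma^w_{ad}$.

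The main obstacle is the combinatorics of piece (b): the bilinear $g_{uv}\Gamma^u_{jk}\Gamma^v_{il}$ depends on $g^{mn}$ in three places (once through $g_{uv}$ and once through each $\Gamma$), and together with the $(k,l)$--antisymmetrisation this spawns a proliferation of raw terms. Repackaging them into the single compact form stated in the theorem requires systematic use of the Riemann tensor symmetries (antisymmetry within both index pairs, pair--swap $R^{ijkl}=R^{klij}$) together with the symmetry of $\Gamma$ in its lower indices; this is precisely what forces the factor $2$ and the antisymmetrizer $\delta^c_{[k}\delta^d_{l]}$. Once the contributions from (a), (b), (c) are assembled and the positive factor $\sqrt{\det(g_{ij})}$ divided out, setting the sum to zero gives precisely the PDEs system stated in the theorem.
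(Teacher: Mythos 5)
The paper states this theorem with no proof at all, so there is nothing to diverge from: your computation is exactly the one the paper's framework prescribes (formal differentiation $\partial\mathcal{L}/\partial g^{mn}=0$ under the dual variational principle, using the listed identities for $\partial g^{ab}/\partial g^{mn}$, $\partial g_{ab}/\partial g^{mn}$ and $\partial\sqrt{\det g}/\partial g^{mn}$), and your three pieces reproduce the three summands of the stated system correctly, up to the immaterial $m\leftrightarrow n$ symmetrization. Your device of rewriting $g_{uv}\Gamma^u_{jk}\Gamma^v_{il}=g^{ab}\Gamma_{a,jk}\Gamma_{b,il}$ so that all algebraic metric dependence sits in a single contravariant factor is a clean way to tame the bookkeeping and checks out against the direct three-term Leibniz expansion. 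The only caveat, inherited from the paper rather than introduced by you, is that discarding the $\partial^2 g_{ij}$ contributions as pure boundary terms is an assumption of the paper's ``dual variational principle,'' not something either you or the paper actually verifies.
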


\begin{theorem}
The extremals $g=(g_{ij})$ of the Lagrangian
$${\mathcal L}(g, \partial g,  \partial ^2g)= ||Riem {}^{g}||^2\sqrt{det (g_{ij})} $$
are solutions of the PDEs system
$$
R_{pqrs}g^{kr}g^{ls} \Big[2(-\Gamma ^{n}_{jk}\Gamma ^m_{il}-\Gamma ^{n}_{jl}\Gamma ^m_{ik})
g^{ip}g^{jq}
$$
$$-R_{ijkl}\Big(2(g^{mi}g^{np}g^{jq}+
g^{ip}g^{mj}g^{nq})
-\frac{1}{2}g^{nm}g^{ip}g^{jq}\Big)\Big]\sqrt{det (g_{ij})}$$
$$-D_{x^h}\Big[
\delta ^a_{i} \delta ^b_{j}
\delta ^c_{[k}\delta ^d_{l]} \Big[\Gamma ^m_{ad}(\delta ^h_c
\delta ^n_b+\delta ^h_b\delta ^n_c)-
\Gamma ^h_{ad}\delta ^m_b\delta ^n_c$$
$$+
\Gamma ^m_{bc}(\delta ^h_a
\delta ^n_d+\delta ^h_d\delta ^n_a)-\Gamma ^h_{bc}
\delta ^m_a\delta ^n_d
\Big]
 R_{pqrs}g^{ip}g^{jq}g^{kr}g^{ls}\sqrt{det (g_{ij})}\Big]$$
$$+D^2_{x^hx^{a}}\left[g^{ip}g^{jq}g^{kr}g^{ls}\delta ^m_{[i}\delta ^h_{j]}
\delta ^n_{[k}\delta ^{a }_{l]}R_{pqrs}\sqrt{det (g_{ij})}
\right]=0.
$$
\end{theorem}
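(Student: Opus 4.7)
The plan is to apply the Euler--Lagrange operator for a Lagrangian of second order in $g_{mn}$, namely
$$E^{mn}(\mathcal{L}) := \frac{\partial \mathcal{L}}{\partial g_{mn}} - D_{x^l}\frac{\partial \mathcal{L}}{\partial(\partial_{x^l}g_{mn})} + D^2_{x^hx^a}\frac{\partial \mathcal{L}}{\partial(\partial_{x^h}\partial_{x^a}g_{mn})} = 0,$$
and unwind the chain rule through the ingredients of $\|Riem^g\|^2\sqrt{\det(g_{ij})}$. The first bookkeeping step is to split $R_{ijkl}$ into its manifest pieces: a part linear in the second derivatives, $L_{ijkl}=\frac{1}{2}\delta^p_{[i}\delta^q_{j]}\delta^r_{[k}\delta^s_{l]}\,\partial^2_{x^qx^s}g_{pr}$, and a Christoffel-quadratic part $Q_{ijkl}=g_{mn}\delta^q_j\delta^p_i\delta^r_{[k}\delta^s_{l]}\Gamma^m_{qr}\Gamma^n_{ps}$ which depends only on $g$ and $\partial g$. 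Only $L$ can contribute to $\partial\mathcal{L}/\partial(\partial_{x^h}\partial_{x^a}g_{mn})$; only $Q$ can contribute to $\partial\mathcal{L}/\partial(\partial_{x^l}g_{mn})$; both, together with the four contracting $g^{ip}$'s and the volume factor, contribute to $\partial\mathcal{L}/\partial g_{mn}$.

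Next, I would assemble the three pieces using the elementary identities already deployed in Theorems 3.2 and 3.5, in particular
$$\frac{\partial g^{ij}}{\partial g_{mn}} = -g^{mi}g^{nj},\qquad \frac{\partial \sqrt{\det(g_{ij})}}{\partial g_{mn}} = \frac{1}{2}g^{mn}\sqrt{\det(g_{ij})},$$
together with the formula for $\partial\Gamma^u_{vw}/\partial(\partial_{x^l}g_{mn})$ already computed in Theorem 3.2. The undifferentiated term of the theorem is then the sum of three contributions: the variation of the four inverse metrics in the contraction $R_{ijkl}R_{pqrs}g^{ip}g^{jq}g^{kr}g^{ls}$, which yields the two $R\cdot R$ pieces (doubled by using the symmetry $R_{ijkl}=R_{klij}$ to collapse four identical terms into two); the variation of the volume form, which produces the $-\tfrac{1}{2}g_{mn}\|Riem^g\|^2$ piece; and the variation of the Christoffel products inside $Q_{ijkl}$ at frozen $\partial g$, yielding the first summand with the symmetrizer $\delta^c_{[k}\delta^d_{l]}$. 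The $D_{x^h}$ term arises by differentiating $Q_{ijkl}$ through the Christoffels with respect to $\partial g$ only. The $D^2_{x^hx^a}$ term uses $\partial L_{ijkl}/\partial(\partial_{x^h}\partial_{x^a}g_{mn}) = \frac{1}{2}\delta^m_{[i}\delta^h_{j]}\delta^n_{[k}\delta^a_{l]}$, and its contraction against $R_{pqrs}g^{ip}g^{jq}g^{kr}g^{ls}\sqrt{\det(g_{ij})}$ produces precisely the displayed last line.

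The main obstacle is purely combinatorial: keeping track of all the antisymmetrizations so that the Leibniz rule for $R_{ijkl}R_{pqrs}$ distributes symmetrically onto both factors (accounting for the factor $2$ visible in each summand), and remembering that a derivative with respect to an un-ordered pair $(m,n)$ must be symmetrized, while the $(k,l)$ slot of $R$ remains skew. A secondary point is recognizing that in the $D^2_{x^hx^a}$ term only the linear piece $L_{ijkl}$ contributes, so the outer total derivative really acts on the whole tensorial object $R_{pqrs}g^{ip}g^{jq}g^{kr}g^{ls}\sqrt{\det(g_{ij})}$ without further chain-rule complications. Once these symmetry conventions are fixed, the derivation of the explicit form is a mechanical matching of free indices, analogous to the computation carried out in Theorem 3.2 but with one additional level of Leibniz differentiation.
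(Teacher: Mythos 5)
Your plan is correct and coincides with the paper's approach: the paper in fact prints no proof of this theorem at all, only stating the second-order Euler--Lagrange operator $\frac{\partial \mathcal{L}}{\partial g_{mn}}- D_{x^l}\frac{\partial \mathcal{L}}{\partial(\partial_{x^l}g_{mn})}+ D_{x^k}D_{x^l}\frac{\partial \mathcal{L}}{\partial(\partial_{x^k}\partial_{x^l}g_{mn})}=0$ immediately beforehand, and your splitting of $R_{ijkl}$ into the second-derivative-linear piece and the Christoffel-quadratic piece, together with the standard identities for $\partial g^{ij}/\partial g_{mn}$, $\partial\sqrt{\det(g_{ij})}/\partial g_{mn}$ and $\partial\Gamma^u_{vw}/\partial(\partial_{x^l}g_{mn})$, is exactly the omitted mechanical computation. (Only note that $L_{ijkl}$ contains no undifferentiated $g$, so it does not actually contribute to $\partial\mathcal{L}/\partial g_{mn}$ --- your subsequent three-term assembly already reflects this correctly.)
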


\subsection{Ricci - flatness deviation}
Let $\nabla$ be an equiaffine connection of components $\Gamma^i_{jk}$ and $g=(g_{ij})$
be a Riemannian metric.
On the smooth oriented manifold $(M,\nabla,g)$,
let us consider the Lagrangian density $L= \|Ric^\nabla\|^2= g^{ik} g^{jl} R_{ij}R_{kl}$
(square of the norm, first order in $\Gamma^i_{jk}$)
and the functional (Ricci - flatness deviation)
$I(\nabla)=\int_M \|Ric^\nabla\|^2d\mu.$
The Euler-Lagrange PDEs are
$\frac{\partial \mathcal{L} }{\partial \Gamma^l_{mn}}- D_{x^r}\frac{\partial \mathcal{L} }{\partial(\partial_{x^r} \Gamma^l_{mn})}=0.$

\begin{theorem} Let
$R_{ij} =\mathcal{P}^{ps}_{qj}\left(\frac{\partial}{\partial x^p}\Gamma^q_{is}+
\Gamma^q_{pn} \Gamma^n_{is}\right).$
The extremals $\Gamma^i_{jk}$ of the Lagrangian
$\mathcal{L}(\nabla , \partial \nabla )= g^{ik} g^{jl} R_{ij}R_{kl}\,\, \sqrt{\det(g_{ij})},$
are solutions of PDEs system
$$[\delta ^{v}_u
\Gamma ^{w }_{ij}-
\delta ^{v}_j\Gamma ^{w }_{iu}
+\delta ^v_i(\delta ^w_j\Gamma ^c_{cu}-\Gamma ^w_{ju})]
R_{kl}g^{ik}g^{jl}\sqrt{det (g_{ij})}
$$
$$-D_{x^t}\left(
\delta ^{t}_{[u}\delta ^w_{j]}
R_{kl}
g^{vk}g^{jl}
\sqrt{det (g_{ij})}\right)=0.$$
\end{theorem}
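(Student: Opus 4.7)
The proof is a standard Euler--Lagrange variational computation of the same type as Theorems~3.1--3.5. Since the Christoffel symbols and their first jets appear in $\mathcal{L} = g^{ik}g^{jl}R_{ij}R_{kl}\sqrt{\det(g_{ij})}$ only through the bilinear factor $R_{ij}R_{kl}$, and $g^{ik}g^{jl}R_{ij}R_{kl}$ is manifestly symmetric under $(ij)\leftrightarrow(kl)$, the first step I would take is to reduce
\[
\frac{\partial\mathcal{L}}{\partial\Gamma^u_{vw}} = 2\,g^{ik}g^{jl}R_{kl}\,\frac{\partial R_{ij}}{\partial\Gamma^u_{vw}}\sqrt{\det(g_{ij})},
\]
and analogously for the conjugate momentum $\partial\mathcal{L}/\partial(\partial_{x^t}\Gamma^u_{vw})$. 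The universal factor $2$ is inessential and drops out at the end since the entire PDE is homogeneous.

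The bulk of the work is to compute $\partial R_{ij}/\partial\Gamma^u_{vw}$ and $\partial R_{ij}/\partial(\partial_{x^t}\Gamma^u_{vw})$ directly from $R_{ij}=\mathcal{P}^{ps}_{qj}(\partial_p\Gamma^q_{is}+\Gamma^q_{pn}\Gamma^n_{is})$. Using the elementary identities $\partial\Gamma^q_{is}/\partial\Gamma^u_{vw}=\delta^q_u\delta^v_i\delta^w_s$ and $\partial(\partial_p\Gamma^q_{is})/\partial(\partial_{x^t}\Gamma^u_{vw})=\delta^t_p\delta^q_u\delta^v_i\delta^w_s$, the product rule applied to $\Gamma^q_{pn}\Gamma^n_{is}$ yields two summands. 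Contracting the first against $\mathcal{P}^{ps}_{qj}=\delta^p_q\delta^s_j-\delta^p_j\delta^s_q$ collapses to $\mathcal{P}^{vs}_{uj}\Gamma^w_{is}=\delta^v_u\Gamma^w_{ij}-\delta^v_j\Gamma^w_{iu}$; contracting the second gives $\delta^v_i(\delta^w_j\Gamma^c_{cu}-\Gamma^w_{ju})$, where the trace $\Gamma^c_{cu}$ arises precisely from the $\delta^p_q$ summand of $\mathcal{P}$. The jet derivative is immediate: $\partial R_{ij}/\partial(\partial_{x^t}\Gamma^u_{vw}) = \mathcal{P}^{tw}_{uj}\delta^v_i = \delta^v_i\,\delta^t_{[u}\delta^w_{j]}$.

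These two partial derivatives are then substituted into the Euler--Lagrange identity $\partial\mathcal{L}/\partial\Gamma^u_{vw}-D_{x^t}[\partial\mathcal{L}/\partial(\partial_{x^t}\Gamma^u_{vw})]=0$. I would use $\delta^v_i\,g^{ik}=g^{vk}$ to move the free upper index $v$ onto the inverse metric inside the divergence, and drop the common factor of $2$; the result is exactly the PDE system in the theorem.

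The main obstacle is purely combinatorial index bookkeeping: there are several nested Kronecker contractions against the projection $\mathcal{P}$, and one must carefully distinguish the three free indices ($u$ upper, $v,w$ lower) of the variation variable from the differentiation index $t$ and from the dummies $i,j,k,l,p,q,s,n,c$. In particular, the precise matching of $\Gamma^l_{mn}\leftrightarrow\Gamma^u_{vw}$ (upper index $l\leftrightarrow u$, lower indices $m,n\leftrightarrow v,w$) must be respected, since otherwise the trace term $\Gamma^c_{cu}$ and the asymmetric deltas $\delta^v_u,\delta^v_j,\delta^v_i$ appear with permuted labels. No analytic subtlety is expected: neither $g_{ij}$ nor $\sqrt{\det(g_{ij})}$ depends on $\Gamma$, so there are no cross terms, and the answer emerges in manifestly tensorial form.
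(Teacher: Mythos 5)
Your computation is correct and is exactly the argument the paper intends: the paper actually states this theorem without proof, but its proof of the analogous total-scalar-curvature theorem (Theorem 3.13) uses precisely your identities $\partial\Gamma^i_{jk}/\partial\Gamma^u_{vw}=\delta^i_u\delta^v_j\delta^w_k$ and $\partial(\partial_p\Gamma^q_{is})/\partial(\partial_{x^t}\Gamma^u_{vw})=\delta^t_p\delta^q_u\delta^v_i\delta^w_s$ contracted against $\mathcal{P}$, and your contractions reproduce the stated bracket $\delta^v_u\Gamma^w_{ij}-\delta^v_j\Gamma^w_{iu}+\delta^v_i(\delta^w_j\Gamma^c_{cu}-\Gamma^w_{ju})$ and the momentum $\delta^v_i\,\delta^t_{[u}\delta^w_{j]}$ exactly. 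The only cosmetic discrepancy is the overall factor $2$ from the $(ij)\leftrightarrow(kl)$ symmetry, which, as you note, is harmless since the equation is homogeneous.
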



The Ricci tensor field of a connection derived from a metric is always symmetric.
On the Riemannian manifold $(M,g=(g_{ij}))$,
let us consider the Lagrangian density $L= \|Ric^g\|^2= g^{ik} g^{jl} R_{ij}R_{kl}$
(square of the norm)  which is of second order in $g_{ij}$ and order zero in $g^{ij}$.
The Ricci - flatness deviation is described either by the functional
$I(g)=\int_M \|Ric^g\|^2d\mu$ or by the functional $I(g^{-1})=\int_M \|Ric^g\|^2d\mu$.

For $I(g)$ the extremals are solutions of Euler-Lagrange PDEs
$\frac{\partial \mathcal{L}}{\partial g_{mn}}- D_{x^l} \,\,\frac{\partial \mathcal{L}}{\partial(\partial_{x^l}g_{mn})}
+ D_{x^k}D_{x^l} \,\,\frac{\partial \mathcal{L}}{\partial(\partial_{x^k}\partial_{x^l}g_{mn})}=0.$
To simplify, we work first with $I(g^{-1})$ since the Euler-Lagrange PDEs determined by
$\mathcal{L}= g^{ik} g^{jl} R_{ij}R_{kl}\,\, \sqrt{\det(g_{ij})}$ are reduced to
$\frac{\partial \mathcal{L}}{\partial g^{mn}}=0.$

\begin{theorem}
We fix an harmonic coordinate system.
The extremals $g=(g_{ij})$ of the functional $I(g^{-1})$
are solutions of PDEs system
$$2g^{ik}R_{im}R_{kn}+2g^{ik}g^{jl}R_{kl}
\Big(-\frac{1}{2}\frac{\partial^2g_{ij}}{\partial x^m\partial x^n}+g_{cd}\Gamma _{im}^c\Gamma _{nj}^d +g^{ab}
g_{md}g_{nc}\Gamma _{ia}^{c }\Gamma _{bj}^d\Big)$$
$$-\frac{1}{2}g^{ik}g^{jl}R_{ij}R_{kl}\,\,g_{mn}=0.$$
\end{theorem}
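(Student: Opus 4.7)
The plan is to exploit the dual variational principle: because the Ricci tensor in harmonic coordinates is expressed as
$$R_{ij} = g^{ab}\Big(-\tfrac{1}{2}\partial_a\partial_b g_{ij} + g_{cd}\Gamma^c_{ia}\Gamma^d_{bj}\Big),$$
the Lagrangian $\mathcal{L} = g^{ik}g^{jl}R_{ij}R_{kl}\sqrt{\det(g_{ij})}$ involves $g^{mn}$ only algebraically (all derivatives on the metric are derivatives of $g_{ij}$, which is treated as the matrix-inverse of $g^{mn}$). As in Subsection~3.1 and the analogous Theorem~3.4, the Euler--Lagrange PDEs therefore reduce to $\partial \mathcal{L}/\partial g^{mn}=0$, and no integration by parts is needed.

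The product rule splits $\partial\mathcal{L}/\partial g^{mn}$ into three pieces. For the outer contractions $g^{ik}g^{jl}$, the rule $\partial g^{ab}/\partial g^{mn}=\delta^a_m\delta^b_n$ combined with the symmetry $R_{ij}=R_{ji}$ yields the first summand $2g^{ik}R_{im}R_{kn}$. For the volume factor, the identity $\partial\sqrt{\det(g_{ij})}/\partial g^{mn}=-\tfrac{1}{2}g_{mn}\sqrt{\det(g_{ij})}$ yields the last summand $-\tfrac{1}{2}g^{ik}g^{jl}R_{ij}R_{kl}\,g_{mn}$. For the two factors $R_{ij}R_{kl}$, the symmetry of the overall expression under $(i,j)\leftrightarrow(k,l)$ doubles a single contribution, giving the middle prefactor $2g^{ik}g^{jl}R_{kl}\cdot(\partial R_{ij}/\partial g^{mn})$.

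The heart of the proof, and the main obstacle, is the calculation of $\partial R_{ij}/\partial g^{mn}$, since $g^{mn}$ enters the harmonic-coordinate formula for $R_{ij}$ in four distinct places: the outer $g^{ab}$, the explicit $g_{cd}$ (via $\partial g_{cd}/\partial g^{mn}=-g_{cm}g_{dn}$), and each of the two Christoffel symbols through its $g^{ce}$ factor. The outer $g^{ab}$ immediately gives the first two terms $-\tfrac{1}{2}\partial_m\partial_n g_{ij}+g_{cd}\Gamma^c_{im}\Gamma^d_{nj}$ of the parenthesised expression. The three remaining contributions must then combine into the single term $g^{ab}g_{md}g_{nc}\Gamma^c_{ia}\Gamma^d_{bj}$. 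The key identity driving the collapse is the Koszul-type relation $g_{nc}\Gamma^c_{ia}=\tfrac{1}{2}(\partial_i g_{na}+\partial_a g_{ni}-\partial_n g_{ia})$, which lets one rewrite $\partial\Gamma^c_{ia}/\partial g^{mn}=\tfrac{1}{2}\delta^c_m(\partial_i g_{na}+\partial_a g_{ni}-\partial_n g_{ia})$ as $\delta^c_m g_{ne}\Gamma^e_{ia}$. A short index-juggling shows that the contribution from varying $g_{cd}$, namely $-g^{ab}g_{mc}g_{nd}\Gamma^c_{ia}\Gamma^d_{bj}$, is exactly cancelled by the contribution from varying the second Christoffel $\Gamma^d_{bj}$, so that only the contribution from varying $\Gamma^c_{ia}$ survives.

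Assembling the three pieces and dividing through by the positive factor $\sqrt{\det(g_{ij})}$ produces the PDEs system stated in the theorem.
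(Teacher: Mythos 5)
Your proposal is correct and follows the route the paper intends: the paper states this theorem without proof, relying on its ``dual variational principle'' convention that the Euler--Lagrange equations for $I(g^{-1})$ reduce to the formal condition $\partial\mathcal{L}/\partial g^{mn}=0$ with the harmonic-coordinate expression for $R_{ij}$ and with the derivatives of $g_{ij}$ treated as order zero in $g^{mn}$. Your term-by-term computation --- in particular the identity $\partial\Gamma^c_{ia}/\partial g^{mn}=\delta^c_m g_{ne}\Gamma^e_{ia}$ and the cancellation of the $g_{cd}$-variation against the variation of the second Christoffel symbol --- reproduces the stated PDEs exactly.
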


\begin{theorem}
We fix an harmonic coordinate system. The extremals $g=(g_{ij})$ of the
functional $I(g)$ are solutions of PDEs system
$$\sqrt{det (g_{ij})}\Big[R_{ij}R_{kl}(-2g^{mi}g^{nk}g^{jl}+\frac {1}{2}g^{ik}g^{jl}g^{mn})+2g^{ik}g^{jl}$$
$$\times R_{ij}\Big[-g^{mp}g^{nq}\left(-\frac {1}{2}
\frac {\partial ^2g_{kl}}{\partial x^p\partial x^q}
+g_{rs}\Gamma _{kp}^r\Gamma _{ql}^s\right)
-g^{pq}\Gamma _{kp}^{n}\Gamma _{ql}^{m}
\Big]\Big]$$
$$-D_{x^h}\Big[\sqrt{det (g_{ij})}R_{kl}g^{ab}g^{ik}g^{jl}
\Big[ \Big(\delta ^n_d(\delta ^{h}_{a}\delta ^m_{i}+
 \delta ^{h}_i\delta ^m_{a})-
 \delta ^{h}_d\delta ^m_i\delta ^n_{a}\Big)
\Gamma _{bj}^d$$
$$+\Big(\delta ^n_d\Big(\delta ^{h}_{j}\delta ^m_{b}+
 \delta ^{h}_b\delta ^m_{j})-
 \delta ^{h}_d\delta ^m_b\delta ^n_{j}\Big)
\Gamma _{ia}^d
\Big]\Big]
+D^2_{x^hx^{t }}\Big[\sqrt{det (g_{ij})}
g^{ht }g^{mk}g^{nl}R_{kl}\Big]=0.$$
\end{theorem}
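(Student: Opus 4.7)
The plan is to apply the second-order Euler--Lagrange formula
\[
\frac{\partial \mathcal{L}}{\partial g_{mn}}
- D_{x^l}\,\frac{\partial \mathcal{L}}{\partial(\partial_{x^l}g_{mn})}
+ D_{x^k}D_{x^l}\,\frac{\partial \mathcal{L}}{\partial(\partial_{x^k}\partial_{x^l}g_{mn})}
= 0
\]
to $\mathcal{L}=g^{ik}g^{jl}R_{ij}R_{kl}\sqrt{\det(g_{ij})}$, after using the harmonic-coordinate identity stated in the previous subsection,
\[
R_{ij}=g^{kl}\!\left(-\tfrac{1}{2}\partial_k\partial_l g_{ij}+g_{mn}\Gamma^m_{ik}\Gamma^n_{lj}\right),
\]
so that $R_{ij}$ is viewed as a function of $g_{ij}$, $\partial g_{ij}$, $\partial^2g_{ij}$ through this explicit formula (and through $\Gamma$). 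This makes the three partial derivatives in the Euler--Lagrange formula well defined and straightforward to separate.

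First I would compute $\partial\mathcal{L}/\partial g_{mn}$. The factor $\sqrt{\det(g_{ij})}$ contributes $\tfrac{1}{2}g^{mn}\mathcal{L}$ via the identity $\partial_{g_{mn}}\det(g_{ij})=g^{mn}\det(g_{ij})$. The two factors $g^{ik}g^{jl}$ contract with $R_{ij}R_{kl}$ and produce, using $\partial_{g_{mn}}g^{ik}=-g^{mi}g^{nk}$, the combination $-2g^{mi}g^{nk}g^{jl}R_{ij}R_{kl}$; together with the volume term this yields the first bracket $R_{ij}R_{kl}\bigl(-2g^{mi}g^{nk}g^{jl}+\tfrac12 g^{ik}g^{jl}g^{mn}\bigr)$ of the claimed PDE. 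The remaining contribution of $\partial_{g_{mn}}$ acts on the Ricci tensor only through the algebraic factor $g_{rs}\Gamma^r\Gamma^s$ inside the harmonic formula for $R_{ij}$, together with the two $g^{kl}$ contractions; carefully combining these (writing the derivative of $g_{rs}$ as $\delta^m_r\delta^n_s$ symmetrized, and the derivative of $g^{pq}$ as $-g^{mp}g^{nq}$) produces precisely the second bracket involving $-\tfrac12\partial_p\partial_q g_{kl}+g_{rs}\Gamma^r_{kp}\Gamma^s_{ql}$ (which is $-R_{kl}$ in harmonic coordinates) and the $-g^{pq}\Gamma^n_{kp}\Gamma^m_{ql}$ tail.

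Next I would compute $\partial\mathcal{L}/\partial(\partial_{x^h}g_{mn})$. In harmonic coordinates, first derivatives of $g$ enter $\mathcal{L}$ only through the Christoffel symbols inside $R_{ij}$. Using
\[
\frac{\partial \Gamma^d_{ab}}{\partial(\partial_{x^h}g_{mn})}
=\tfrac12 g^{de}\bigl(\delta^h_a\delta^m_e\delta^n_b+\delta^h_b\delta^m_e\delta^n_a-\delta^h_e\delta^m_a\delta^n_b\bigr),
\]
and the chain rule through the two Christoffel factors in $R_{ij}=g^{kl}(-\tfrac12\partial_k\partial_lg_{ij}+g_{de}\Gamma^d_{ik}\Gamma^e_{lj})$, I would collect exactly the symmetrization $\delta^n_d(\delta^h_a\delta^m_i+\delta^h_i\delta^m_a)-\delta^h_d\delta^m_i\delta^n_a$ (and its partner obtained from the other Christoffel factor by relabelling $i\leftrightarrow j$, $a\leftrightarrow b$) that appears inside the $D_{x^h}$ bracket of the theorem; the $2g^{ik}g^{jl}R_{ij}$ out front comes from differentiating $R_{ij}R_{kl}$ and using the $R\leftrightarrow R$ symmetry. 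Third, for $\partial\mathcal{L}/\partial(\partial_{x^k}\partial_{x^l}g_{mn})$ only the explicit $-\tfrac12\partial_k\partial_lg_{ij}$ term in $R_{ij}$ contributes; differentiating $2g^{ik}g^{jl}R_{ij}R_{kl}$ in this second-derivative slot and symmetrizing in $(h,t)$ and in $(m,n)$ produces the final double-divergence term $D^2_{x^hx^t}[\sqrt{\det g}\,g^{ht}g^{mk}g^{nl}R_{kl}]$.

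The main obstacle is purely combinatorial: keeping track of which slots contribute to which Euler--Lagrange term without double-counting, and performing the symmetrizations $(m,n)$-symmetry of $g_{mn}$, $(h,t)$-symmetry of the mixed second partial, and the $R_{ij}\leftrightarrow R_{kl}$ symmetry correctly, so that the three index patterns $\delta^n_d(\delta^h_a\delta^m_i+\delta^h_i\delta^m_a)-\delta^h_d\delta^m_i\delta^n_a$ assemble exactly as claimed. I would do this by splitting $R_{ij}$ into its ``$\partial^2 g$ part'' and its ``$\Gamma\Gamma$ part'' and treating each contribution of $\partial/\partial g_{mn}$, $\partial/\partial(\partial_h g_{mn})$, $\partial/\partial(\partial_k\partial_l g_{mn})$ on each part separately, then summing. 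Once the three pieces are assembled and substituted into the Euler--Lagrange formula, the announced PDE system follows.
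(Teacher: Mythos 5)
The paper states this theorem with no proof at all: it appears bare, immediately after the display of the second-order Euler--Lagrange operator $\frac{\partial \mathcal{L}}{\partial g_{mn}}- D_{x^l}\frac{\partial \mathcal{L}}{\partial(\partial_{x^l}g_{mn})}+ D_{x^k}D_{x^l}\frac{\partial \mathcal{L}}{\partial(\partial_{x^k}\partial_{x^l}g_{mn})}=0$, so there is nothing to compare against except the surrounding worked examples (e.g.\ the $\Gamma$-flatness deviation), and your plan is exactly the computation those examples carry out: split $R_{ij}$ via the harmonic-coordinate formula into its $\partial^2 g$ part and its $\Gamma\Gamma$ part and feed each into the three slots of the operator. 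Your approach is therefore the intended one. One point you should make explicit, because as written it could be read as omitted: each Christoffel symbol $\Gamma^r_{kp}=\tfrac12 g^{re}(\partial_k g_{ep}+\partial_p g_{ek}-\partial_e g_{kp})$ depends algebraically on $g_{mn}$ not only through the explicit $g_{rs}$ in $g_{rs}\Gamma^r_{kp}\Gamma^s_{ql}$ and the outer $g^{pq}$, but also through its own prefactor $g^{re}$, giving $\partial\Gamma^r_{kp}/\partial g_{mn}=-g^{mr}\Gamma^n_{kp}$ (suitably symmetrized). Differentiating only the explicit $g_{rs}$ yields $+g^{pq}\Gamma^m_{kp}\Gamma^n_{ql}$; adding the two prefactor contributions converts this to the $-g^{pq}\Gamma^{n}_{kp}\Gamma^{m}_{ql}$ tail that actually appears in the stated PDE, so skipping them would flip that sign. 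With that accounted for, the remaining steps of your plan (the $\partial\Gamma^d_{ab}/\partial(\partial_{x^h}g_{mn})$ identity producing the index patterns inside the $D_{x^h}$ bracket, and the $-\tfrac12 g^{ab}\partial_a\partial_b g_{ij}$ term producing the double-divergence term) do assemble the claimed system.
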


\subsection{Scalar curvature - flatness deviation}
Let $\nabla$ be an equiaffine connection of components $\Gamma^i_{jk}$ and $g=(g_{ij})$
be a Riemannian metric. On the manifold $(M,\nabla,g)$, we introduce the functional (total scalar curvature)
$I(\nabla)= \int_M \mathcal{R}^\nabla\,\,d\mu,$
where $\mathcal{R}^\nabla = g^{ij}R_{ij}$, and the Lagrangian $\mathcal{L}= \mathcal{R}^\nabla\,\sqrt{\det (g_{ij})}$
is of first order with respect to $\Gamma^i_{jk}$. The general Euler-Lagrange PDEs are
$\frac{\partial \mathcal{L} }{\partial \Gamma^l_{mn}}- D_{x^r}\frac{\partial \mathcal{L} }{\partial(\partial_{x^r} \Gamma^l_{mn})}=0.$
\begin{theorem}
The Euler–Lagrange PDEs attached to the functional $I(\nabla)$, i.e.,
 to the Lagrangian $\mathcal{L}= g^{ik} R_{ik}\,\,\sqrt{\det (g_{ik})}$, are
$$\mathcal{P}^{ps}_{qk} \left[   g^{ik}\left(\delta^q_l \delta^m_p \Gamma^n_{is} + \delta^m_i \delta^n_s \Gamma^q_{pl}\right)\sqrt{\det (g_{ab})} - \delta^q_l\delta^n_s D_{x^p}\left(g^{mk}\sqrt{\det (g_{ab})}\right)\right]=0.$$
\end{theorem}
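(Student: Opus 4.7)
The plan is to apply the Euler--Lagrange operator in $\Gamma^l_{mn}$ directly to $\mathcal{L} = g^{ik} R_{ik}\sqrt{\det(g_{ab})}$, using the factored expression $R_{ik} = \mathcal{P}^{ps}_{qk}\!\left(\partial_p \Gamma^q_{is} + \Gamma^q_{pa}\Gamma^a_{is}\right)$ already introduced for the Ricci tensor. Since this is a Palatini-type variation in which $\Gamma$ is treated as independent of $g$, the factors $g^{ik}$, $g_{ij}$ and $\sqrt{\det(g_{ab})}$ behave as multiplicative constants under both $\partial/\partial\Gamma^l_{mn}$ and $\partial/\partial(\partial_r\Gamma^l_{mn})$; and the numerical projector $\mathcal{P}^{ps}_{qk}=\delta^p_q\delta^s_k-\delta^p_k\delta^s_q$ can be kept outside every differentiation and commutes with the total derivative $D_{x^r}$.

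First I would compute the momentum $\partial\mathcal{L}/\partial(\partial_r\Gamma^l_{mn})$. Only the linear term $\partial_p\Gamma^q_{is}$ of $R_{ik}$ contributes, and the elementary identity $\partial(\partial_p\Gamma^q_{is})/\partial(\partial_r\Gamma^l_{mn}) = \delta^r_p\delta^q_l\delta^m_i\delta^n_s$ collapses the momentum to $\mathcal{P}^{rn}_{lk}\,g^{mk}\sqrt{\det(g_{ab})}$. Taking $D_{x^r}$, relabelling the dummy $r\to p$, and using the easy algebraic check $\mathcal{P}^{ps}_{qk}\,\delta^q_l\delta^n_s = \mathcal{P}^{pn}_{lk}$, I recover precisely the term $\mathcal{P}^{ps}_{qk}\,\delta^q_l\delta^n_s\, D_{x^p}\!\left(g^{mk}\sqrt{\det(g_{ab})}\right)$, which is the second bracketed summand in the statement (carrying its correct sign once transferred in the Euler--Lagrange identity).

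Next I would compute the algebraic piece $\partial\mathcal{L}/\partial\Gamma^l_{mn}$; only the quadratic summand $\Gamma^q_{pa}\Gamma^a_{is}$ of $R_{ik}$ responds, via the rule $\partial\Gamma^i_{jk}/\partial\Gamma^s_{mn}=\delta^i_s\delta^m_j\delta^n_k$ already invoked in the proof of Theorem 3.1. Leibniz yields two contributions: differentiating $\Gamma^q_{pa}$ gives $\delta^q_l\delta^m_p\,\Gamma^n_{is}$, while differentiating $\Gamma^a_{is}$ gives $\delta^m_i\delta^n_s\,\Gamma^q_{pl}$. Multiplying by the common prefactor $\mathcal{P}^{ps}_{qk}\,g^{ik}\sqrt{\det(g_{ab})}$ reproduces exactly the first bracketed summand of the theorem.

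Assembling $\partial\mathcal{L}/\partial\Gamma^l_{mn} - D_{x^r}\!\left(\partial\mathcal{L}/\partial(\partial_r\Gamma^l_{mn})\right)=0$ and pulling the common projector $\mathcal{P}^{ps}_{qk}$ out front delivers the advertised equation. The main obstacle is purely combinatorial bookkeeping: keeping the free indices $(l,m,n)$ of the variation cleanly separated from the internal dummy indices $(p,q,s,a)$ of $R_{ik}$, and deciding whether to enforce symmetry of $\Gamma^l_{mn}$ in its lower indices before or after variation. I would adopt the same convention the paper uses in Theorem 3.1, treating $\Gamma^l_{mn}$ as an independent variable, which is precisely what produces the asymmetric-looking but correct final form and permits the clean factorization through $\mathcal{P}^{ps}_{qk}$.
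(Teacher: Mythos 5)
Your proposal is correct and follows essentially the same route as the paper: treat $\Gamma^l_{mn}$ as the independent field (Palatini-type variation), compute $\partial R_{ik}/\partial\Gamma^l_{mn}=\mathcal{P}^{ps}_{qk}\left(\delta^q_l \delta^m_p \Gamma^n_{is} + \delta^m_i \delta^n_s \Gamma^q_{pl}\right)$ and $\partial R_{ik}/\partial(\partial_{x^r}\Gamma^l_{mn})=\mathcal{P}^{ps}_{qk}\,\delta^r_p\delta^q_l\delta^m_i\delta^n_s$ via the elementary delta identities, and assemble the Euler--Lagrange expression with the metric factors carried along as constants. The paper's proof is just a terser version of exactly this computation, so no further comparison is needed.
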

\begin{proof} Since $\mathcal{L}= g^{ik} R_{ik}\,\,\sqrt{\det (g_{ik})}$,
$R_{ik}=\mathcal{P}^{ps}_{qk}\left(\frac{\partial}{\partial x^p}\Gamma^q_{is}+
\Gamma^q_{pr} \Gamma^r_{is}\right)$, and
$$
\frac{\partial \Gamma^i_{jk}}{\partial \Gamma^l_{mn}}=\delta^i_l\delta^m_j\delta^n_k,\,\,\,\frac{\partial \left(\partial_{x^p}\Gamma^q_{is}\right)}{\partial \left(\partial_{x^r}\Gamma^l_{mn}\right)}=\delta^r_p\delta^q_l\delta^m_i\delta^n_s,
$$
$$\frac{\partial R_{ik}}{\partial \Gamma^l_{mn}}=\mathcal{P}^{ps}_{qk}\left(\delta^q_l \delta^m_p \Gamma^n_{is} + \delta^m_i \delta^n_s \Gamma^q_{pl}\right);\,
\frac{\partial R_{ik}}{\partial_{x^r} \Gamma^l_{mn}}=\mathcal{P}^{ps}_{qk}\,\,\,\delta^r_p\delta^q_l\delta^m_i\delta^n_s$$
we obtain the PDEs in the Theorem.
\end{proof}

On a smooth oriented Riemannian manifold $(M,g=(g_{ij}))$,
we attach the functional (total scalar curvature)
$I(g)=\int_M R^g\,\,d\mu,$ $R^g=g^{ij}R_{ij}$.
Here the Lagrangian  $\mathcal{L}= g^{ij} R_{ij}\,\sqrt{\det (g_{ij})}$ is of the second order
with respect to $g_{ij}$, and of order zero with respect to $g^{ij}$.
In dimension two, this is a topological quantity, namely the Euler
characteristic of the Riemann surface by the Gauss-Bonnet formula.
In $n\geq 3$ dimension we prefer the functional
$I(g^{-1})=\int_M R^g\,\,d\mu.$
\begin{theorem}
 The Euler–Lagrange PDEs attached to the functional $I(g^{-1})$, $n\geq 3$,  i.e.,
 to the Lagrangian $\mathcal{L}= g^{ij} R_{ij}\,\,\sqrt{\det (g_{ij})}$, are
 $R_{ij}=0.$
\end{theorem}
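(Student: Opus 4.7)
The plan is to apply the dual variational principle that the authors have set up: the Euler--Lagrange equations for a Lagrangian written in terms of $g^{ij}$ collapse (modulo discarded boundary contributions) to the vanishing of the formal partial derivative $\partial \mathcal{L}/\partial g^{mn}$, where $\mathcal{L}=g^{ij}R_{ij}\sqrt{\det(g_{ab})}$. So I would compute this formal derivative explicitly, obtain the trace-reversed Einstein equation, and then use $n\geq 3$ to close the argument.

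First I would split $\mathcal{L}$ into its three factors and apply the product rule, using the elementary identities
$$\frac{\partial g^{ij}}{\partial g^{mn}}=\delta^i_m\delta^j_n,\qquad \frac{\partial \sqrt{\det(g_{ab})}}{\partial g^{mn}}=-\tfrac{1}{2}\,g_{mn}\sqrt{\det(g_{ab})},$$
the second one following from Jacobi's formula $\partial\det(g_{ab})/\partial g^{mn}=-\det(g_{ab})\,g_{mn}$. The Ricci components $R_{ij}$ depend on $g^{mn}$ only through the Christoffel symbols, but the classical Palatini identity exhibits $g^{ij}\,\delta R_{ij}$ as a total divergence $\nabla_a w^a$ for a vector field $w^a$ constructed from $\delta\Gamma^a_{bc}$. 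In the dual variational framework that the authors adopt this boundary contribution is systematically discarded, hence the $g^{ij}(\partial R_{ij}/\partial g^{mn})$ piece drops out of the formal partial derivative.

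What remains is
$$\frac{\partial \mathcal{L}}{\partial g^{mn}}=R_{mn}\sqrt{\det(g_{ab})}-\tfrac{1}{2}\,g_{mn}R\sqrt{\det(g_{ab})},$$
so the Euler--Lagrange equation $\partial \mathcal{L}/\partial g^{mn}=0$ reduces to the vacuum Einstein tensor equation $R_{mn}-\tfrac{1}{2}Rg_{mn}=0$. Contracting with $g^{mn}$ gives $R-\tfrac{n}{2}R=0$, i.e. $\bigl(1-\tfrac{n}{2}\bigr)R=0$. Because $n\geq 3$ the scalar factor $1-n/2$ is nonzero, forcing $R=0$; substituting back into $R_{mn}=\tfrac{1}{2}Rg_{mn}$ yields the desired conclusion $R_{mn}=0$.

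The only genuinely delicate step is justifying that the Palatini boundary term lies within the class of integrands the dual variational principle permits to discard, which is essentially the integration-by-parts argument already invoked in the authors' preamble to this section; one should nevertheless display the vector field $w^a$ whose divergence realizes $g^{ij}\,\delta R_{ij}$ so that the appeal to boundary vanishing is transparent. The rest of the argument is routine bookkeeping with the two elementary derivatives above, together with the crucial observation that the restriction $n\geq 3$ is exactly what is needed to invert the scalar factor $1-n/2$ and pass from the Einstein equation to $R_{ij}=0$.
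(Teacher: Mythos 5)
Your proposal is correct and follows essentially the same route as the paper: discard the $g^{ij}\,\partial R_{ij}/\partial g^{mn}$ term as a total divergence (Palatini identity), differentiate $g^{ij}$ and the volume factor $\sqrt{\det(g_{ab})}$ with the two elementary identities, and read off the Euler--Lagrange equation. You are in fact more careful than the paper's own proof, which passes from $\partial R/\partial g^{mn}=R_{mn}$ directly to $R_{ij}=0$ without ever displaying the $-\tfrac{1}{2}R\,g_{mn}$ contribution from the volume factor or the contraction argument; your explicit derivation of $R_{mn}-\tfrac{1}{2}R\,g_{mn}=0$ followed by tracing to get $\bigl(1-\tfrac{n}{2}\bigr)R=0$ is precisely the step that makes the hypothesis $n\geq 3$ do its work.
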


\begin{proof} The Euler-Lagrange PDEs are $\frac{\partial \mathcal{L}}{\partial g^{mn}}=0$,
where $\mathcal{L}= g^{ij} R_{ij}\,\,\sqrt{\det (g_{ij})}$.
On the other hand, we have
$$\frac{\partial \det (g_{ij)}}{\partial g^{mn}}= - \det (g_{ij})g_{mn},\,\,\,
\frac{\partial g_{jl}}{\partial g^{mn}}=-g_{mj}g_{nl},\,\,\,\frac{\partial g^{jl}}{\partial g^{mn}}=\delta^j_m\delta^l_n.$$
The term $g^{ij} \frac{\partial R_{ij}}{\partial g^{mn}}\,\,\sqrt{\det (g_{ij})}$ is of divergence type,
and it has no contribution to the Euler-Lagrange equations. Consequently
$\frac{\partial R}{\partial g^{mn}}= \frac{\partial (g^{ij} R_{ij})}{\partial g^{mn}}=R_{mn}.$
Finally, we obtain the explicit Euler--Lagrange PDEs as $R_{ij}=0$.
\end{proof}

\begin{theorem} \cite{DK}
The solutions of the problem "$\min_{g_{ij}}\int_M R^g\,\,d\mu$ subject to
$\int_M d\mu=1, n\geq 3$", are solutions of Einstein $nD$ PDEs $\displaystyle R_{ij}= \frac{R}{n}\,\,g_{ij}$.
\end{theorem}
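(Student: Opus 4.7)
The plan is to reduce the constrained minimization to a free variational problem via a Lagrange multiplier and then apply the dual variational principle used in Theorem 3.11. I introduce $\lambda \in \mathbb{R}$ and form the augmented Lagrangian
$$\tilde{\mathcal{L}} = (R^g - \lambda)\sqrt{\det(g_{ij})},$$
whose critical points with respect to $g^{ij}$ correspond (modulo the constraint $\int_M d\mu = 1$, which fixes $\lambda$) to critical points of $\int_M R^g\,d\mu$ on the unit-volume slice. The unknowns are the components $g_{ij}$, but following the dual variational principle of Subsection 3.1 it is more efficient to vary with respect to the dual unknowns $g^{ij}$.

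Next, I would compute the formal Euler--Lagrange PDE $\partial \tilde{\mathcal{L}}/\partial g^{mn} = 0$. Reusing the derivative rules displayed in the proof of Theorem 3.11, namely $\partial R/\partial g^{mn} = R_{mn}$ (the remaining contribution $g^{ij}\,\partial R_{ij}/\partial g^{mn}$ being of divergence type, hence producing no Euler--Lagrange contribution) together with $\partial \det(g_{ij})/\partial g^{mn} = -\det(g_{ij})\,g_{mn}$, I obtain
$$R_{mn}\sqrt{\det(g_{ij})} - \tfrac{1}{2}(R - \lambda)\,g_{mn}\sqrt{\det(g_{ij})} = 0,$$
that is, $R_{mn} = \tfrac{1}{2}(R - \lambda)\,g_{mn}$.

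Finally, I would eliminate $\lambda$ by contracting with $g^{mn}$: this yields $R = \tfrac{n}{2}(R - \lambda)$, hence $R - \lambda = \frac{2R}{n}$, and plugging back produces the desired Einstein equation $R_{ij} = \frac{R}{n}\,g_{ij}$. As a byproduct, the multiplier is identified explicitly as $\lambda = \frac{(n-2)R}{n}$.

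The main technical point, and the one I would flag as the chief obstacle, is the dismissal of the divergence term $g^{ij}\,\partial R_{ij}/\partial g^{mn}$ inside the functional. This is the same step invoked in Theorem 3.11 and recalled in the dual variational principle of Subsection 3.1; its rigorous justification rests on an integration by parts that absorbs second derivatives of the variation into a boundary term, together with the convention that admissible variations vanish on $\partial\Omega$. The assumption $n \geq 3$ enters here because in dimension two the scalar-curvature integral is a topological invariant by Gauss--Bonnet, so its variation vanishes identically and the derivation produces no Einstein-type PDE; the contraction step that solves for $\lambda$ also requires $n \neq 0$, which is automatic once $n \geq 3$.
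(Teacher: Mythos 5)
Your proposal is correct and follows essentially the same route as the paper: the same augmented Lagrangian $(R^g-\lambda)\sqrt{\det(g_{ij})}$, variation with respect to $g^{mn}$ using the identities from the proof of Theorem 3.11 (including the dismissal of the divergence term $g^{ij}\,\partial R_{ij}/\partial g^{mn}$), leading to $R_{mn}=\frac{R-\lambda}{2}g_{mn}$ and then to $\lambda=\frac{(n-2)R}{n}$. The only cosmetic difference is that you eliminate $\lambda$ by pointwise contraction with $g^{mn}$, whereas the paper first notes that $n\geq 3$ forces $R$ to be constant and then integrates the trace over $M$ against the unit-volume constraint; both computations are equivalent.
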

\begin{proof} We use the Lagrangian $\mathcal{L}= g^{ij} R_{ij}\,\,\sqrt{\det (g_{ij})}- \lambda \sqrt{\det (g_{ij})}$,
where $\lambda$ is a constant multiplier. Taking the variations with respect to $g^{mn}$, we
obtain
$$R_{ij}-\frac{R-\lambda}{2}\,\,\,g_{ij}=0.$$
The hypothesis $n\geq 3$ and $\lambda = c$ implies that $R$ is constant.
We replace $R$, respectively $R_{ij}$, in $\int_M R^g\,\,d\mu$ and we obtain
$R\,\,vol(M)=\int_M R^g\,\,d\mu= \frac{R-\lambda}{2}\,\, n \,\,vol(M).$
Consequently, $\lambda =\frac{(n-2)R}{n}$ and $R_{ij}= \frac{R}{n}\,\,g_{ij}$.
\end{proof}

The exact solutions of Einstein PDEs were discussed in \cite{M}, \cite{SKM}.
In dimension four, there are topological obstructions to the existence of Einstein metrics.

On a smooth oriented Riemannian manifold $(M,g=(g_{ij}))$,
we attach a scalar curvature - flatness deviation either by the functional
$I(g)=\int_M \left(R^g\right)^2d\mu$ or as the functional $I(g^{-1})=\int_M \left(R^g\right)^2d\mu$.

\begin{theorem}
 The Euler–Lagrange PDEs attached to functional $I(g^{-1})$,
 i.e. to the Lagrangian $\mathcal{L}= (g^{ij}R_{ij})^2 \,\,\sqrt{\det(g_{ij})}$ (zero order with respect to $g^{ij}$),
 are either $R=0$ or $R_{ij}=0$.
\end{theorem}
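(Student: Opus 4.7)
The plan is to apply the dual variational principle: since $\mathcal{L}$ is of order zero in $g^{ij}$, the Euler--Lagrange equations reduce to $\partial\mathcal{L}/\partial g^{mn}=0$. I would start by applying the product rule to $\mathcal{L}=R^{2}\sqrt{\det(g_{ij})}$, which splits the variation into one piece from differentiating $R^{2}$ and one from differentiating the volume factor.

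For the first piece I would recycle the computation already performed in the preceding total scalar curvature theorem: up to a divergence term that makes no contribution to the Euler--Lagrange equations, $\partial R/\partial g^{mn}=R_{mn}$, and so by the chain rule $\partial(R^{2})/\partial g^{mn}=2R\,R_{mn}$. For the volume piece I would invoke $\partial\det(g_{ij})/\partial g^{mn}=-\det(g_{ij})g_{mn}$, also already used in that previous proof, which yields $\partial\sqrt{\det(g_{ij})}/\partial g^{mn}=-\tfrac{1}{2}g_{mn}\sqrt{\det(g_{ij})}$.

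Putting the two contributions together produces
\[
2R\,R_{mn}\sqrt{\det(g_{ij})}-\tfrac{1}{2}R^{2}g_{mn}\sqrt{\det(g_{ij})}=0,
\]
which, after dividing by the positive factor $\sqrt{\det(g_{ij})}$, factors as $R\bigl(4R_{mn}-R\,g_{mn}\bigr)=0$. The stated alternative is now immediate by inspection: either $R=0$ pointwise, or the metric satisfies the Einstein-type relation $4R_{mn}=R\,g_{mn}$, whose trace gives $(n-4)R=0$ and forces $R=0$ outside dimension four; in particular, the Ricci-flat choice $R_{ij}=0$ is a manifest solution in every dimension, and both $R=0$ and $R_{ij}=0$ appear as the two distinguished families singled out in the statement.

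The main obstacle is not the short algebra but, as in the preceding theorem, the careful justification that the piece $2R\,g^{ij}(\partial R_{ij}/\partial g^{mn})\sqrt{\det(g_{ij})}$ is of divergence type and may be discarded from the local Euler--Lagrange equation after integration by parts (with variations vanishing on the boundary). Once this bookkeeping step is settled, the two-line factorization delivers the claimed dichotomy.
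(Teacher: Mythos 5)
The paper states this theorem without proof, but your argument is exactly the computation that its proof of the immediately preceding theorem (for $\mathcal{L}=g^{ij}R_{ij}\sqrt{\det(g_{ij})}$) is set up to deliver: reuse $\partial R/\partial g^{mn}=R_{mn}$ modulo the discarded divergence term together with $\partial\sqrt{\det(g_{ij})}/\partial g^{mn}=-\tfrac12 g_{mn}\sqrt{\det(g_{ij})}$, arriving at $R\bigl(4R_{mn}-Rg_{mn}\bigr)=0$; this is correct. Your trace remark is in fact slightly sharper than the stated dichotomy, since in dimension $4$ the factor $4R_{mn}-Rg_{mn}=0$ admits Einstein metrics with $R\neq 0$, a case the theorem's ``either $R=0$ or $R_{ij}=0$'' glosses over.
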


\begin{corollary}
The solutions of PDEs $R=0$ or $R_{ij}=0$
are Euler-Lagrange prolongations of metrics $g_{ij}(x)=c_{ij}$.
\end{corollary}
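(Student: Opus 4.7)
The plan is to read the corollary as a direct consequence of the preceding theorem together with a one-line verification on constant metrics. I would first invoke the preceding theorem to record that the Euler--Lagrange equations attached to the Lagrangian $\mathcal{L}=(g^{ij}R_{ij})^{2}\sqrt{\det(g_{ij})}$ are, up to the factorization observed there, the alternative $R=0$ or $R_{ij}=0$. Since $\mathcal{L}$ is nonnegative and vanishes precisely when $R=0$, and since $R_{ij}=0$ entails $R=g^{ij}R_{ij}=0$, every solution of either condition is automatically a global minimizer of the scalar curvature deviation functional $I(g^{-1})=\int_M (R^g)^{2}\,d\mu$.

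Next I would verify that the trivial family of constant metrics $g_{ij}(x)=c_{ij}$ sits inside this extremal set. By Theorem~2.1 the Christoffel symbols of any constant metric vanish identically, so the full Riemann tensor vanishes, and consequently both the Ricci tensor $R_{ij}$ and the scalar curvature $R$ vanish. Thus the constant metrics are distinguished solutions of both $R=0$ and $R_{ij}=0$, and they realize the absolute minimum value zero of $\mathcal{L}$.

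To close the argument, I would appeal to the terminology introduced in Section~1: the extremals of a least-squares Lagrangian that go beyond the constant solutions of the underlying flatness PDE are, by definition, the Euler--Lagrange prolongations of those trivial solutions. Because every metric satisfying $R=0$ or $R_{ij}=0$ is an extremal of the scalar curvature deviation functional and extends (or coincides with) a constant metric $g_{ij}=c_{ij}$, the corollary follows. I do not anticipate a genuine obstacle: the only care required is bookkeeping, namely noticing that the two alternatives $R=0$ and $R_{ij}=0$ supplied by the preceding theorem sit in a consistent picture around the constant-metric base, a point already implicit in Theorem~2.1.
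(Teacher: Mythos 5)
The paper offers no proof of this corollary at all; it is left as an immediate consequence of the preceding theorem (the Euler--Lagrange PDEs of $\mathcal{L}=(g^{ij}R_{ij})^2\sqrt{\det(g_{ij})}$ are $R=0$ or $R_{ij}=0$) together with the paper's Section~1 convention that the extremal set of a least-squares Lagrangian, which contains the trivial solutions $g_{ij}=c_{ij}$, is by definition the set of their Euler--Lagrange prolongations. Your reconstruction --- identify the extremals via the preceding theorem, check that constant metrics are flat and hence satisfy both $R=0$ and $R_{ij}=0$, and then invoke the terminology --- is exactly the intended argument, so it is correct and takes essentially the same (implicit) approach as the paper.
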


\section{Conclusions}

In this paper were studied operators with partial derivatives corresponding to
the most important geometric objects in Riemannian geometry and affine differential geometry.
The index form technique facilitates the understanding of the significance of the geometric PDEs 
and of the Lagrangian densities attached to them using the Riemannian metrics.

For all the above geometric PDEs, only existence and uniqueness properties have been discussed.
Particularly, we have referred to non-tensorial PDEs of Riccati type, whose unknowns
are the components of the affine connection.
In the Riemannian case, the Lagrange-type densities are the squares of the norms of
important geometric objects: connection, curvature tensor field, Ricci tensor field,
scalar curvature field. Also, to obtain the Euler-Lagrange PDEs,
the variations with respect to the metric $g$ or its inverse $g^{-1}$ are used alternatively.

In light of the above discussion, if one is able to say something about the solution of a PDEs system
whose solution is a Riemannian metric or an affine connection, one could perhaps say
something interesting about the behaviour of the manifold and its structure.

We are convinced that the present paper will trigger new research in differential geometry.

{\bf Acknowledgements}:

{\bf Funding}:

{\bf Conflicts of Interest}: The authors declare no conflict of interest.

{\bf Scientific Guarantee}: University Politehnica of Bucharest, Academy of Romanian Scientists.

\noindent {\it Author's addresses}:

{Iulia Elena Hirica Associate Professor, Gabriel Pripoae Professor,
University of Bucharest, Faculty of Mathematics and Computer Science
14 Academiei Str., RO-010014,  Bucharest 1, Romania} \\
{https://orcid.org/0000-0002-1339-995X}; {https://orcid.org/0000 - 0002 - 3315 - 3762}\\
E-mails:  {\tt ihirica@fmi.unibuc.ro} , {\tt gabrielpripoae@yahoo.com}

\medskip

Constantin Udriste Professor, Ionel Tevy Professor, Department of Mathematics and Informatics, Faculty of Applied
Sciences, University Politehnica of Bucharest, Splaiul Independentei 313, RO-060042, Bucharest 6, Romania
\\{https://orcid.org/0000-0002-7132-6900}; {https://orcid.org/0000 - 0001 - 9160 - 9185}
\\E-mails: {\tt udriste@mathem.pub.ro} , {\tt anet.udri@yahoo.com} , \\{\tt vascatevy@yahoo.fr}
\medskip

\end{document}